\documentclass[11pt]{amsart}
\usepackage[margin=1in]{geometry}
\usepackage{amsmath,amssymb,amsthm}
\usepackage{mathtools}
\usepackage{enumitem}
\usepackage{booktabs}
\usepackage{hyperref}
\usepackage{xcolor}
\usepackage{bm, comment}

\theoremstyle{plain}
\newtheorem{theorem}{Theorem}[section]
\newtheorem{proposition}[theorem]{Proposition}
\newtheorem{lemma}[theorem]{Lemma}
\newtheorem{corollary}[theorem]{Corollary}

\theoremstyle{definition}

\newtheorem{example}[theorem]{Example}
\newtheorem{remark}[theorem]{Remark}
\theoremstyle{plain}
\newtheorem*{theoremA}{Main Theorem}

\DeclareMathOperator{\Res}{Res}

\newcommand{\ZZ}{\mathbb Z}
\newcommand{\QQ}{\mathbb Q}
\newcommand{\CC}{\mathbb C}
\newcommand{\SU}{\mathrm{SU}}

\title[Resultants, Recursive Formulas and Binet Formulas for $\textrm{\rm SU}(N)$ Verlinde Sums]
{Resultants, Recursive Formulas and Binet Formulas for $SU(N)$ Verlinde Sums}

\author{Jay Jorgenson}\thanks{The first named author acknowledges grant support from PSC-CUNY Awards 67415-00 55 and 68462-00 56, which are jointly funded by the Professional Staff Congress and The City University of New York.}
\author{Anders Karlsson}\thanks{The second-named author was supported by the Swiss NSF Grants 200020-200400 and 200021-212864, and by the Swedish Research Council Grant 104651320.}
\author{Lejla Smajlovi\'c}

\date{}

\begin{document}

\begin{abstract}
We study Verlinde sums $V_{n}(N,m)$, which for our purposes are finite sums of trigonometric functions $\{r_{x}\}$,
associated to $\textrm{\rm SU}(N)$.   In this article we prove the following results:
(i) the polynomial $D_N(w):=\prod_x(w-r_x)$ is equal to, up to an explicit and computable factor, the iterated
resultant of $Q(u)=u^m-1$ with an explicit elementary polynomial $W_N$ which is independent of $n$ and $m$;
(ii) for fixed $N$ and $m$, $V_n(N,m)$ satisfies a linear recursion of order at most
$\binom{m-1}{N-1}$, together with arguments showing that one actually has a recursion of length equal to
the number of distinct roots of $D_{N}$; (iii) for fixed $N$ and $m$, a Binet-type closed form expressing $V_n(N,m)$ as an explicit
finite sum of $n$-th powers of algebraic numbers which are rescaled reciprocals of the distinct roots of $D_N(w)$ is derived. Several explicit examples are provided for $N=2,3,4,5,6$ and $8$.
\end{abstract}

\maketitle

\section{Introduction}\label{sec:intro}

\subsection{The object of study}

Verlinde's formula, originally proposed in conformal field theory, computes the dimension of a finite-dimensional vector space associated with a compact Lie group, a Riemann surface of genus $g$, and a positive integral level $k$~\cite{Ve88}. Its mathematical interpretation in terms of conformal blocks and generalized theta functions, together with rigorous proofs and algebro-geometric formulations, was developed in \cite{BL94,TUY90,Fa94,Be96}; see also the monograph~\cite{Ku21}.

For integers $N\ge2$, $m>N$, and $n\ge1$, set $x_0:=0$ and define
\begin{equation}\label{eq:VSum}
V_n(N,m) := 2\bigl(Nm^{N-1}\bigr)^n \sum_{0<x_1<\cdots<x_{N-1}<m} \prod_{0\le a<b\le N-1} \left(2\sin\frac{\pi(x_b-x_a)}{m}\right)^{-2n}.
\end{equation}
We call $V_n(N,m)$ a \emph{Verlinde sum}. With the normalization used in~\cite{Za96}, it is related to the $\SU(N)$ Verlinde dimension for genus $g=n+1$ at level $k=m-N$ by
\begin{equation*} V_n(N,m)=2D(n+1,N,m), \qquad D(g,N,m):= \dim H^{0}\!\bigl( \mathcal N_{g,N,0},\Theta^{\otimes(m-N)} \bigr),
\end{equation*}
where $\mathcal N_{g,N,0}$ is the moduli space of semistable rank $N$ vector bundles with trivial determinant over a fixed Riemann surface of genus $g$, and $\Theta$ denotes the determinant line bundle; see~\cite{Za96}. 

Following the point of view adopted in \cite{Za96} and \cite{JKS26}, we suppress the geometric, algebraic, topological and physical background attached to $V_n(N,m)$, and study directly the finite
trigonometric expression on the right-hand side of~\eqref{eq:VSum}, with particular emphasis on their spectral interpretation and effective computation.  As noted in \cite{Za96}, starting from $N\geq 4$, the direct calculation of these sums is not so straightforward, \emph{cf.} Example \ref{subsec:N4} below. 

The combinatorial study of such sums began with the verification and analysis of the
$\SU(2)$ and $\SU(3)$  cases in~\cite{Do92,Sz91,Sz93,Za96} and was developed through iterated residues,
Bernoulli series, vector partition functions, and Euler--Maclaurin formulas
in~\cite{BoVe09,BrVe97,Sz98,Sz03,SV03}. For example, in \cite{Sz03} a general residue theorem for rational trigonometric sums was derived and Verlinde sums for a fixed genus and varying level were expressed as a finite sum of iterated residues determined by the underlying lattice and hyperplane arrangement.
Verlinde sums were also treated as rational trigonometric sums in~\cite{LM22}, where a decomposition formula for verlinde sums, similar to the Boysal-Vergne decomposition formula \cite{BV12} for Bernoulli series was established. Spectral and discrete-analytic methods
for related trigonometric sums appear in~\cite{CHJSV23a,Do92,JKS24}.  

These approaches primarily address the dependence on the level for fixed genus, or explicit rank-two summation formulas. The present work instead develops an algebraic combinatorics method for evaluation of Verlinde sums in terms of the genus parameter at fixed level.
More precisely, the present article extends the combinatorial, algebraic and algorithmic
aspects developed for $\SU(3)$ in~\cite{JKS26}, focusing exclusively on certain
algebraic and combinatorial features of $V_n(N,m)$. In particular, we
establish the ordinary generating function in $n$ for fixed $m$ and $N$,
formulate linear recursion formulas in $n$, and develop Binet-type formulas for
$V_n(N,m)$.

\subsection{Our results}

In the present article we focus on generalizing the combinatorial, algebraic and algorithmic aspects from \cite{JKS26} to $\text{\rm SU}(N)$ Verlinde sums.  For $u = (u_0, u_1, \dots, u_{N-1})$, let us write
\begin{equation}\label{eq. Delta}
\Delta(u) := \prod_{0 \le a < b \le N-1} (u_a - u_b).
\end{equation}
For notational convenience, we adopt the convention that $u_0 := 1$. Define the polynomial
\begin{equation}\label{eq:W_definition}
W_N(u_1, \dots, u_{N-1}; w) \;:=\; w \prod_{a=1}^{N-1} u_a^{\,N-1} \;-\; (-1)^{\binom{N}{2}} \,\Delta(1, u_1, \dots, u_{N-1})^2.
\end{equation}
For integers $m>N>0$ define the set
\begin{equation}\label{eq:alcove}
\mathcal A(m,N):=\big\{x=(x_1,\dots,x_{N-1})\in\ZZ^{N-1} : 0<x_1<\cdots<x_{N-1}<m\big\},
\end{equation}
which we will refer to as the \emph{alcove}. In essence, $\mathcal A(m,N)$ is an analogue of the circulant graph
from \cite{JKS26} but modeled on the alcove of the complex Lie algebra $A_{N-1}$ which is associated to $\textrm{\rm SU}(N)$,
but that point of view plays no role in our considerations.  For each $x \in \mathcal A(m,N)$ we define a number $r_{x}$ which is a product
of values of trigonometric functions; see \eqref{eq:r_definition}.    In order to state our results, we need that the cardinality $M_N(m)$ of $\mathcal A(m,N)$ equals $\binom{m-1}{N-1}$, and we set $c_{N,m} := Nm^{N-1}$.

Define
\begin{equation}
r_x :=\; \prod_{0\le a<b\le N-1} 4\sin^2\tfrac{\pi(x_b-x_a)}{m},
\end{equation}
and
\begin{equation}\label{eq. DN}
D_N(w) := \prod_{x \in \mathcal A(m,N)} (w - r_x).
\end{equation}

\begin{theoremA}\label{thm:main}
Fix $N \ge 2$ and $m > N$. We have the following results.
\begin{enumerate}
\item[\textup{(i)}] \textbf{(Resultant formula)} Let $Q(u) := u^m - 1$. Then
\begin{equation}\label{eq:resultant}
D_N(w)^{(N-1)!} = (f(w))^{-1}\operatorname{Res}_{u_1}\!\Big(Q(u_1), \dots, \operatorname{Res}_{u_{N-1}}\big(Q(u_{N-1}), W_N(u_1,\dots,u_{N-1};w)\big)\dots\Big)
\end{equation}
where
$$
f(w) = \varepsilon_{N,m}\, w^{\,m^{N-1} - M_N(m)(N-1)!}
$$
and
$$
\varepsilon_{N,m} = 1
\,\,\,\,
\textrm{\rm for $N \ge 3$ and every $m$, and $\varepsilon_{2,m} = (-1)^{m-1}$.}
$$
In particular $D_N(w)$ is obtained by computing the $(N-1)!$-th root of \eqref{eq:resultant}.
\item[\textup{(ii)}] \textbf{(Generating function)} For generating function purposes, set $V_0(N,m):=2M_N(m)$. With $G_N(w) := D_N'(w)/D_N(w)$,
$$
\sum_{n \ge 0} V_n(N,m)\,\tau^n \;=\; 2\Big(M_N(m) - v\,G_N(v)\Big)\Big|_{v = c_{N,m}\tau},
$$
as an identity of formal power series in $\tau$. Analytically, both sides of the above equation agree and the series on the left hand side converges uniformly and absolutely for $|\tau|\leq R<\min_xr_x/c_{N,m}$.

\item[\textup{(iii)}] \textbf{(Fixed $m$ and $N$ linear recursion)} Writing $D_N(c_{N,m}\tau) = \sum_k q_k \tau^k$, the sequence $\big(V_n(N,m)\big)_{n \ge 0}$ satisfies the linear recursion with constant coefficients
$$
\sum_{k=0}^{\min(n,M_N(m))} q_k\, V_{n-k}(N,m) \;=\; \begin{cases} p_n, & 0 \le n \le M_N(m), \\ 0, & n > M_N(m), \end{cases}
$$
of  order at most $M_N(m)$, where $p_n$ is the $n$-th coefficient of $P_N(\tau) := 2\big(M_N(m) D_N(c_{N,m}\tau) - c_{N,m}\tau D_N'(c_{N,m}\tau)\big)$.

\item[\textup{(iv)}] \textbf{(Binet formula)} Let $\{a\}$ denote the distinct values among $\{r_x\}_{x \in \mathcal A(m,N)}$, with $\mu(a)$ being the cardinality of the set $\{x\in\mathcal{A}(m,N):\, r_x=a\}$. Set $y_a := c_{N,m}/a$. Then for every $n \ge 1$,
$$
V_n(N,m) \;=\; 2\sum_{a} \mu(a)\, y_a^{\,n}.
$$
\end{enumerate}
\end{theoremA}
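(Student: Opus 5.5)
The plan is to establish the Binet formula (iv) first, since (ii) and (iii) follow from it by formal manipulation, and to treat the resultant formula (i) separately as an algebraic computation over $m$-th roots of unity.

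\emph{Binet formula.} Since $(2\sin\theta)^{-2n}=(4\sin^2\theta)^{-n}$, definition \eqref{eq:VSum} reads
$$V_n(N,m)=2c_{N,m}^n\sum_{x\in\mathcal A(m,N)} r_x^{-n}.$$
Every $r_x$ is nonzero, because $0<x_b-x_a<m$ for $a<b$. Grouping equal values of $r_x$ with multiplicity $\mu(a)$ then gives (iv) at once.

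\emph{Generating function and recursion.} Write $D_N(w)=\prod_x(w-r_x)$, so that
$$vG_N(v)=\sum_x \frac{v}{v-r_x}=-\sum_x\sum_{n\ge1}(v/r_x)^n \qquad\text{for } |v|<\min_x r_x.$$
Substituting $v=c_{N,m}\tau$ and adding the constant term $2M_N(m)=V_0(N,m)$ yields (ii). Absolute and uniform convergence on $|\tau|\le R$ follows from comparison with a geometric series. Multiplying (ii) by $D_N(c_{N,m}\tau)$ gives $P_N(\tau)$, which is a polynomial of degree at most $M_N(m)$. Comparing coefficients of $\tau^n$ gives (iii).

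\emph{Resultant formula.} I would use the product formula for resultants. Since $Q$ is monic with roots the $m$-th roots of unity $\zeta$, iterating gives
$$\operatorname{Res}_{u_1}(Q,\dots)=\pm\prod_{\zeta_1^m=\cdots=\zeta_{N-1}^m=1} W_N(\zeta_1,\dots,\zeta_{N-1};w),$$
where the sign is fixed by the convention $\operatorname{Res}(Q,F)=\prod_{Q(\zeta)=0}F(\zeta)$ with $\deg Q=m$. Next I would write $\zeta_a=e^{2\pi i x_a/m}$ with $x_0=0$ and use
$$(\zeta_a-\zeta_b)^2=-\zeta_a\zeta_b\cdot4\sin^2\tfrac{\pi(x_b-x_a)}{m}.$$
Each index occurs in $N-1$ pairs, so
$$\Delta^2=(-1)^{\binom N2}\prod_{a\ge1}\zeta_a^{N-1}\, r_x,$$
and therefore
$$W_N=\prod_a\zeta_a^{N-1}\,(w-r_x).$$

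The product over tuples then splits into two groups. For tuples with some coincidence $\zeta_a=\zeta_b$ (including $\zeta_a=1$), $\Delta=0$ and the factor is $w\prod\zeta_a^{N-1}$. For tuples with all entries distinct, the tuple is a permutation of an element of the alcove: the unordered set $\{x_1,\dots,x_{N-1}\}\subset\{1,\dots,m-1\}$ is chosen in $M_N(m)$ ways, with $(N-1)!$ orderings, and $r_x$ is symmetric under permutation of the labels. This gives $D_N(w)^{(N-1)!}$. The degenerate tuples number $m^{N-1}-M_N(m)(N-1)!$, which produces the power of $w$.

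The main obstacle is the bookkeeping of signs and roots-of-unity prefactors to obtain exactly $\varepsilon_{N,m}$. The prefactor is $\prod_{\text{all tuples}}\prod_a\zeta_a^{N-1}=\bigl(\prod_{\zeta^m=1}\zeta\bigr)^{(N-1)m^{N-2}}$, and $\prod_{\zeta^m=1}\zeta=(-1)^{m-1}$. So the prefactor equals $(-1)^{(m-1)(N-1)m^{N-2}}$. For $N\ge3$ this is $1$: if $m$ is odd then $m-1$ is even, and otherwise $m^{N-2}$ is even. For $N=2$ it is $(-1)^{m-1}$. I would also check the resultant sign convention carefully, particularly for $N=2$ as a test case.
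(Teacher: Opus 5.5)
Your proposal is correct and follows essentially the paper's route: for (i) you use the product formula for iterated resultants with monic $Q$, the identity $W_N=\prod_a\zeta_a^{N-1}(w-r)$ (the content of Lemma~\ref{lem:sign}), and the degenerate/non-degenerate tuple count with $S_{N-1}$-orbits of size $(N-1)!$; for (ii)--(iv) you use the same geometric-series expansion, the same multiplication by $D_N(c_{N,m}\tau)$ with coefficient comparison, and the same grouping by distinct values. One small slip: the prefactor is $\bigl(\prod_{\zeta^m=1}\zeta\bigr)^{(N-1)^2m^{N-2}}$, since each of the $N-1$ indices contributes exponent $(N-1)m^{N-2}$, rather than exponent $(N-1)m^{N-2}$; however $(N-1)^2\equiv N-1\pmod 2$, so your value of $\varepsilon_{N,m}$ is unaffected, and with monic $Q$ in the first slot your $\pm$ is simply $+$.
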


Part (iii) gives an explicit, finite recursion for $V_n(N,m)$ in $n$, for fixed $N$ and $m$, yet of order at most $M_N(m)$, which grows quickly with $m$.
 We continue our study and show that the order of recursion in (iii) can always be shortened to be at most $K_{\min}(N,m)$, which can
be bounded \it a priori \rm using two different symmetry considerations: the Galois action on $\mathbb{Q}(\zeta_m)^+$, which bounds
the degree of each irreducible factor of $D_N$, and a dihedral symmetry, which explains many coincidences among roots and yields an upper bound for the number of distinct values. The characteristic roots of this shortened recursion are exactly
the $y_a$ of part (iv), giving a closed form for $V_n(N,m)$ that directly generalizes the classical Binet formula \begin{equation}\label{eq:Classical_Binet}
F_n \;=\; \frac{1}{\sqrt5}\left[\left(\frac{1+\sqrt5}{2}\right)^{\!n} - \left(\frac{1-\sqrt5}{2}\right)^{\!n}\right]
\,\,\,\,\,
\textrm{\rm $n \ge 0$}
\end{equation} 
for the $n$-th Fibonacci number $F_{n}$.

For example, when $N=5$ and $m=10$, in Section 6 below, we prove that $V_n(5,10)$ can be expressed in terms of the Lucas numbers $L_k$ as
\begin{equation}\label{Vn5,10}
V_n(5,10)=2\Bigl[\,5\cdot400^{n}+10\cdot125^{n}+16^{n}
+5\cdot2000^{n}\,L_{6n}
+10\cdot500^{n}\,L_{4n}
+\bigl(20\cdot500^{n}+10\cdot125^{n}+10\cdot100^{n}\bigr)L_{2n}\Bigr],
\end{equation}
while
\begin{equation}\label{Vn8,10}
V_n(8,10)=8\cdot20^{\,n}+16\cdot80^{\,n}\,L_{2n}+16\,s_n,
\qquad
s_n=16^{\,n}\,5^{\lceil n/2\rceil}
\begin{cases}L_n,&n\text{ even}\\[2pt]F_n,&n\text{ odd}.\end{cases}
\end{equation}

One way by which our results can be summarized is through the following algorithm, which
references results from the text below.

\medskip
\textbf{The algorithm}\label{sec:algorithm}
\begin{quote}
\textbf{Input:} Integers $N\ge2$, $m > N$, $n\ge1$.
\begin{enumerate}[label=(\arabic*)]
\item Let $Q(u):=u^m-1$ and $W_N$ as in \eqref{eq:W_definition}.
\item Compute the iterated resultant of Theorem~\ref{thm:resultant} and extract $D_N(w)$ via Corollary~\ref{cor:extract} (an exact $(N-1)!$-th root, with the power of $w$ known in advance, so no numerical root-extraction ambiguity arises).
\item Factor $D_N(w)$ over $\QQ$ and read off the distinct roots $a$ and their multiplicities $\mu(a)$ (Theorem~\ref{thm:binet}).
\item Set $y_a:=Nm^{N-1}/a$.
\end{enumerate}
\textbf{Output:} $V_n(N,m) = 2\sum_a \mu(a)\,y_a^{\,n}$ (Theorem~\ref{thm:binet}), or, for a sequence of $n$'s, the order-$K_{\min}(N,m)$ recursion of Theorem~\ref{thm:minrec} with seeds $V_1(N,m),\dots,V_{K_{\min}(N,m)}(N,m)$ computed once from the Binet formula.
\end{quote}

\subsection{Organization}
In Section 2 we establish further information and key results for our study. In Section 3 we prove the first two parts of our main theorem.  We build
the auxiliary polynomial $W_N$ from a single elementary fact about complex conjugation on the unit
circle, and in Section 3.2 we give a complete, self-contained proof of the resultant identity for $D_N(w)$.
In Section 3.3 we compare this
construction at $N=3$ with the resultant construction of \cite{JKS26}, giving an exact formula
relating the two resultants. In Section 4 we prove the (unreduced) fixed $m$ recursion as
well as the Binet formula, thus completing the proof of the Main Theorem.  As stated
above, this recursion formula is prohibitively long, so we next investigate reductions of the length, which amounts to studying the
multiplicities of the factors of $D_{N}(w)$.  Specifically, in Section 5 we reduce the recursion using the field-theoretic structure of $\{r_x\}$.  In Section 5.1 we obtain a bound
from Galois theory, and in Section 5.2 we get a further dihedral bracelet bound,
which accounts for the bulk of the reduction.  In Section 5.3 we identify the true minimal
recursion order $K_{\min}(N,m)$.  Finally, in Section 6
we present many fully worked, numerically verified examples for $N = 2,3,4,5,6$ and $8$. We highlight
the example $N=8$, $m=10$ as a conclusion.

\subsection{Comments on the use of AI}
We used AI tools for the proof-reading and copy-editing of our manuscript. In addition, they were used for symbolic calculations of polynomials. On the other hand, all proofs and methodology were developed by the authors.

\section{Background}\label{sec:background}

\subsection{The resultant}

Let $R$ be an integral domain contained in an algebraically closed field $\overline R$, and let
\[
f(u) = \sum_{i=0}^{m} f_i u^i, \qquad g(u) = \sum_{j=0}^{n} g_j u^j \qquad (f_m\ne0,\ g_n\ne0)
\]
be polynomials in $R[u]$ of degrees $m,n\ge0$. The \emph{Sylvester matrix} $\mathrm{Syl}(f,g)$ is the $(m+n)\times(m+n)$ matrix whose first $n$ rows are successive shifts of the coefficient vector of $f$ and whose last $m$ rows are successive shifts of the coefficient vector of $g$, meaning
\[
\mathrm{Syl}(f,g) = \begin{pmatrix}
f_m & f_{m-1} & \cdots & f_0 & & & \\
 & f_m & f_{m-1} & \cdots & f_0 & & \\
 & & \ddots & & & \ddots & \\
 & & & f_m & f_{m-1} & \cdots & f_0 \\
g_n & g_{n-1} & \cdots & g_0 & & & \\
 & g_n & g_{n-1} & \cdots & g_0 & & \\
 & & \ddots & & & \ddots & \\
 & & & g_n & g_{n-1} & \cdots & g_0
\end{pmatrix}.
\]
The \emph{resultant} of $f$ and $g$ is the determinant of $\mathrm{Syl}(f,g)$, written as $\Res(f,g) := \det \mathrm{Syl}(f,g) \in R$.
Note that if the coefficients of $f$ or $g$ depend on a variable, say $w$, then so does $\Res(f,g)$.

\begin{lemma}\label{lem:resultant-basics}
Assume notation as above.
\begin{enumerate}[label=(\roman*)]
\item The resultant $\Res(f,g)=0$ if and only if $f$ and $g$ have a common root in $\overline R$.
\item If $f$ is monic with roots $\alpha_1,\dots,\alpha_m\in\overline R$ counted with multiplicity, then
\[
\Res(f,g) = \prod_{i=1}^{m} g(\alpha_i).
\]
\item If $f=f^{(1)}f^{(2)}$, then $\Res(f,g)=\Res(f^{(1)},g)\,\Res(f^{(2)},g)$.
\end{enumerate}
\end{lemma}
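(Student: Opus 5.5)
The three parts are standard facts about the Sylvester resultant. The plan is to prove (ii) first, deduce (iii) from it, and obtain (i) separately. The main tools are the structure of the Sylvester matrix and the freedom to extend scalars to $\overline R$, since the determinant does not depend on the ring in which it is computed.

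\textbf{Part (ii).} We work over $\overline R$ and treat the roots $\alpha_1,\dots,\alpha_m$ of the monic $f$ first as independent indeterminates. Then $f=\prod_i(u-\alpha_i)$, and the coefficients $f_{m-k}=(-1)^k e_k(\alpha)$ are elementary symmetric polynomials. Consider $\Res(f,g)$ as a polynomial in $\alpha_1,\dots,\alpha_m$ and the coefficients $g_j$.

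The first step is to show that it vanishes whenever $\alpha_i$ is a root of $g$. Multiply column $j$ of the Sylvester matrix by $\alpha_i^{m+n-j}$ and add the result to the last column. The last column then becomes
\[
(u^{n-1}f,\dots,f,\;u^{m-1}g,\dots,g)^T\big|_{u=\alpha_i}.
\]
This column vanishes if $g(\alpha_i)=0$, because $f(\alpha_i)=0$ already.

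Consequently $\prod_i g(\alpha_i)$, viewed as a polynomial in the $\alpha$'s and the $g_j$, divides $\Res(f,g)$ in the polynomial ring. Both sides are homogeneous of degree $m$ in the $g_j$, since each of the $m$ rows of $g$-coefficients contributes one factor. So the two expressions agree up to a constant. That constant equals $1$: specialize $g=u^n$. Then $\prod_i g(\alpha_i)=(\prod_i\alpha_i)^n=((-1)^m f_0)^n$. On the other side, the Sylvester matrix becomes block triangular, with the $g$-block equal to the identity shifted into the last $m$ columns, and its determinant equals $f_0^n$ up to the sign $(-1)^{mn}$. The signs match.

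Finally, specialize the indeterminates $\alpha_i$ to the actual roots; this specialization is a ring homomorphism, so the identity survives.

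\textbf{Part (iii).} Assume first that $f^{(1)}$ and $f^{(2)}$ are monic. Their roots together form the roots of $f$, so (ii) gives
\[
\prod g(\alpha_i)=\prod_{\text{roots of }f^{(1)}}g\cdot\prod_{\text{roots of }f^{(2)}}g.
\]
In general, pull out the leading coefficients using the homogeneity $\Res(cf,g)=c^{n}\Res(f,g)$. The exponents of the leading coefficients add correctly, because $\deg f=\deg f^{(1)}+\deg f^{(2)}$.

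\textbf{Part (i).} The direction ``common root $\Rightarrow\Res=0$'' follows from the column operation above: the vector $(\alpha^{m+n-1},\dots,1)$ lies in the kernel of the Sylvester matrix. For the converse, normalize $f$ to be monic over $\overline R$ (using $f_m\ne 0$) and apply (ii). If the product $\prod_i g(\alpha_i)$ vanishes, some $\alpha_i$ is a root of $g$, since $\overline R$ is a field.

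The main obstacle I expect is the sign and normalization check in (ii), namely the determinant of the specialized Sylvester matrix. I would handle it by performing an explicit permutation of columns.
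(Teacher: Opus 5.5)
The paper gives no proof of this lemma. It calls the three facts entirely classical and refers the reader to standard treatments of elimination theory. Your argument is the usual generic-roots proof: divisibility in $\overline R[\alpha_1,\dots,\alpha_m,g_0,\dots,g_n]$, a degree comparison, and a normalizing specialization. It is essentially correct. Compared with the citation, it makes the lemma self-contained and yields the explicit formula $\Res(f,g)=f_m^{\,n}\prod_i g(\alpha_i)$ for non-monic $f$, from which (i) and (iii) follow at once.

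Several steps should be stated more carefully.

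\emph{The normalizing constant.} Homogeneity of degree $m$ in the $g_j$ shows only that the cofactor $h$ in $\Res(f,g)=h\prod_i g(\alpha_i)$ lies in $\overline R[\alpha_1,\dots,\alpha_m]$, not that it is a constant. This does no harm: the specialization $g=u^n$ leaves $h$ untouched and gives $h\,(\alpha_1\cdots\alpha_m)^n=(\alpha_1\cdots\alpha_m)^n$ in a domain, so $h=1$.

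\emph{The determinant in that specialization.} For $g=u^n$ the $g$-rows carry their $1$'s in the \emph{first} $m$ columns, not the last. The complementary block is the lower-triangular matrix $(f_{k-l})_{1\le k,l\le n}$ with diagonal $f_0$. Moving the $m$ $g$-rows to the top costs $(-1)^{mn}$, so your value $(-1)^{mn}f_0^{\,n}$ is right. An identity in the last $m$ columns would correspond to the constant specialization $g\equiv 1$ and would give $f_m^{\,n}$ instead.

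\emph{Divisibility by the product.} Your column operation already gives $\Res(f,g)=g(\alpha_i)\det M_i$ directly. Since $f(\alpha_i)=0$ identically, the new last column is $g(\alpha_i)\,(0,\dots,0,\alpha_i^{m-1},\dots,1)^T$. To pass from each factor to the product, you need that, for $n\ge1$, the $g(\alpha_i)=g_0+\sum_{j\ge1}g_j\alpha_i^j$ are pairwise non-associate irreducibles in that UFD. The case $n=0$ is trivial, since then $\mathrm{Syl}(f,g)=g_0I_m$.

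\emph{Part (iii).} The leading coefficients enter only through $\Res(cf,g)=c^{\,n}\Res(f,g)$. The exponent is $n=\deg g$ in every factor, so the bookkeeping is just $(c_1c_2)^n=c_1^{\,n}c_2^{\,n}$, not anything tied to $\deg f$.
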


These facts are entirely classical; see any standard treatment of elimination theory.

\begin{remark}\label{rem:iterated}
Let $f_1(u_1),\dots,f_k(u_k)$ be monic polynomials and let $g(u_1,\dots,u_k,w)$ be any polynomial.
By iteratively applying Lemma~\ref{lem:resultant-basics}(ii), working from the innermost variable outward, we get that
\begin{equation}\label{eq:bigresultant}
\Res_{u_1}\Big(f_1(u_1),\dots,\Res_{u_k}\big(f_k(u_k),\,g(u_1,\dots,u_k,w)\big)\dots\Big) \;=\; \prod_{\substack{u_1:\,f_1(u_1)=0 \\ \ \ \vdots \\ u_k:\,f_k(u_k)=0}} g(u_1,\dots,u_k,w).
\end{equation}
The result \eqref{eq:bigresultant} is a polynomial in $w$ alone which is equal to the product of $g$ over the entire
set of roots of $f_1,\dots,f_k$.  If $d_i=\deg f_i$ and $g$ has degree $d$ in $w$, then the right-hand side of \eqref{eq:bigresultant} contains $\prod_{i=1}^k d_i$ factors and has degree at most $d\prod_{i=1}^k d_i$ in $w$.  Equality holds when the leading coefficient of $g$ in $w$ does not vanish at any root tuple.
\end{remark}

\subsection{The alcove}\label{sec:vandermonde}

For $u=(u_0,\dots,u_{N-1})$, recall \eqref{eq. Delta} and for $m > N$, recall the definition \eqref{eq:alcove} of the alcove.
One can show that $|\mathcal A(m,N)|=\binom{m-1}{N-1}$. For notational convenience, we set $x_0:=0$ throughout. For $x\in\mathcal A(m,N)$, let
$$
u_a:=e^{2\pi i x_a/m}
\,\,\,\,\,
\textrm{\rm for}
\,\,\,\,\,
a=0,\dots,N-1,
$$
so $u_0=1$.  Define
\begin{equation}\label{eq:r_definition}
r_x := |\Delta(u(x))|^2 \;=\; \prod_{0\le a<b\le N-1} 4\sin^2\tfrac{\pi(x_b-x_a)}{m}.
\end{equation}
In this notation, the $\textrm{\rm SU}(N)$ Verlinde sum at level $k=m-N$, genus $n+1$, is given by
\begin{equation}
V_n(N,m) \;:=\; 2\big(Nm^{N-1}\big)^{n}\sum_{x\in\mathcal A(m,N)} r_x^{-n}. \label{eq:defn}
\end{equation}
Given that $N$ is fixed, we will, in a slight abuse of notation, call $m$ the level.

\begin{lemma}\label{lem:vandermonde}
For every $x\in\mathcal A(m,N)$, $r_x=|\Delta(u(x))|^2$ is a real, strictly positive, algebraic integer.
\end{lemma}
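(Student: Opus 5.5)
The plan is to verify the three properties (real, strictly positive, algebraic integer) directly from the definition $r_x=|\Delta(u(x))|^2$ in \eqref{eq:r_definition}, taking them in turn.

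\emph{Reality and the trigonometric form.} Since $|\cdot|^2$ is real and nonnegative, reality is immediate. To confirm the trigonometric expression in \eqref{eq:r_definition}, use the elementary identity
\[
|e^{i\theta}-e^{i\phi}|^2 = 2-2\cos(\theta-\phi)=4\sin^2\tfrac{\theta-\phi}{2}.
\]
Applied with $\theta=2\pi x_b/m$ and $\phi=2\pi x_a/m$, it gives $|u_b-u_a|^2=4\sin^2\frac{\pi(x_b-x_a)}{m}$. Multiplying over all pairs $0\le a<b\le N-1$ then yields the product formula.

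\emph{Positivity.} For $x\in\mathcal A(m,N)$ we have $0=x_0<x_1<\cdots<x_{N-1}<m$. Hence for $a<b$,
\[
0<x_b-x_a<m, \qquad\text{so}\qquad 0<\frac{\pi(x_b-x_a)}{m}<\pi .
\]
The sine is nonzero on this interval, so every factor is strictly positive and so is $r_x$. Equivalently, the $u_a$ are distinct $m$-th roots of unity, so $\Delta(u(x))\neq 0$.

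\emph{Algebraic integrality.} Each $u_a$ is a root of unity, hence an algebraic integer, and $\bar u_a=u_a^{-1}=u_a^{m-1}$ is also an algebraic integer. Therefore $\Delta(u(x))$ and its complex conjugate $\overline{\Delta(u(x))}=\Delta(\bar u(x))$ are polynomials with integer coefficients in algebraic integers. Since algebraic integers form a ring, the product
\[
r_x=\Delta(u(x))\,\overline{\Delta(u(x))}
\]
is an algebraic integer lying in $\ZZ[\zeta_m]$, where $\zeta_m=e^{2\pi i/m}$. Being real, it lies in $\ZZ[\zeta_m]\cap\mathbb R$, which is contained in the ring of integers of $\QQ(\zeta_m)^+$. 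One could alternatively write each factor as $2-\zeta_m^{k}-\zeta_m^{-k}$ with $k=x_b-x_a$ to make membership in $\ZZ[\zeta_m+\zeta_m^{-1}]$ explicit.

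There is no real obstacle here. The only point needing care is the strict inequality $0<x_b-x_a<m$, which rules out a vanishing sine factor.
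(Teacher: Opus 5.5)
Your proposal is correct and follows essentially the same route as the paper: reality and nonnegativity from the squared modulus, strict positivity from $0<x_b-x_a<m$ (distinct $u_a$), and integrality because the roots of unity and their conjugates $u_a^{-1}=u_a^{m-1}$ lie in $\ZZ[\zeta_m]$. The only cosmetic difference is that you multiply $\Delta\cdot\overline{\Delta}$ as a whole, while the paper argues factor by factor via $|u_a-u_b|^2=2-u_a\bar u_b-\bar u_a u_b$. You mention that variant yourself, and both work.
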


\begin{proof}
Note that $r_x=\prod_{a<b}|u_a-u_b|^2$ is a finite product of squared moduli of differences of points on the unit circle,
hence real and nonnegative; it is strictly positive since $0=x_0<x_1<\cdots<x_{N-1}<m$ forces the $u_a$ pairwise distinct.
Each factor $|u_a-u_b|^2=2-u_a\bar u_b-\bar u_a u_b$ is an algebraic integer in $\ZZ[\zeta_m]$, so $r_x$ is an algebraic integer.
\end{proof}

Observe that complex conjugation acts on $\{u_a(x)\}$ by $x\mapsto -x\pmod m$ (followed by re-ordering in the alcove) and fixes $r_x$, hence $|\Delta(u(x))|^2 = |\Delta(u(-x))|^2$ and $r_x\in\QQ(\zeta_m)^+$, the maximal real subfield of the $m$-th cyclotomic field $\QQ(\zeta_m)$.

\subsection{The polynomial $D_N(w)$ and its generating function}\label{sec:determinant}

Recall \eqref{eq. DN} and set $G_N(w) := \frac{D_N'(w)}{D_N(w)}$.
By Lemma \ref{lem:vandermonde} and the Galois invariance argument of Lemma~\ref{lem:galois} below,
we conclude that $D_N(w)\in\ZZ[w]$, is monic, and has degree $M_N(m):=|\mathcal A(m,N)|=\binom{m-1}{N-1}$.

\begin{proposition}\label{prop:gf} Set $V_0(N,m):=2M_N(m)$, and recall that
 $c_{N,m}=Nm^{N-1}$.  Then
\begin{equation}
\sum_{n\ge0} V_n(N,m)\,\tau^n \;=\; 2\Big(M_N(m) - v\,\frac{D_N'(v)}{D_N(v)}\Big)\Big|_{v=c_{N,m}\tau}. \label{eq:gf}
\end{equation}
The identity holds as an identity of formal power series in $\mathbb C[[\tau]]$.  Analytically, both sides agree and the series converges absolutely, uniformly on compact subsets of $|\tau|<\min_x r_x/c_{N,m}$.
\end{proposition}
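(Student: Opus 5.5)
The plan is to compute the logarithmic derivative of $D_N$ directly from its factorization and expand each term as a geometric series. Since $D_N(w)=\prod_{x\in\mathcal A(m,N)}(w-r_x)$ is monic of degree $M_N(m)$ with all roots $r_x>0$ (Lemma~\ref{lem:vandermonde}), we have
\[
\frac{D_N'(v)}{D_N(v)}=\sum_{x\in\mathcal A(m,N)}\frac{1}{v-r_x},
\qquad\text{so}\qquad
v\frac{D_N'(v)}{D_N(v)}=\sum_x \frac{v}{v-r_x}=-\sum_x\frac{v/r_x}{1-v/r_x}.
\]
For $|v|<r_x$ this gives $\frac{v}{v-r_x}=-\sum_{n\ge1}(v/r_x)^n$, and summing over $x$,
\[
M_N(m)-v\frac{D_N'(v)}{D_N(v)}=M_N(m)+\sum_{n\ge1}\Big(\sum_x r_x^{-n}\Big)v^n.
\]
Substituting $v=c_{N,m}\tau$ and multiplying by $2$, the coefficient of $\tau^n$ for $n\ge1$ is $2c_{N,m}^n\sum_x r_x^{-n}=V_n(N,m)$ by \eqref{eq:defn}. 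The constant term is $2M_N(m)=V_0(N,m)$ by the stated convention. This proves \eqref{eq:gf} coefficientwise.

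For the formal power series statement, I would note that $D_N(0)=\prod_x(-r_x)\neq0$, so $D_N(c_{N,m}\tau)$ is a unit in $\mathbb C[[\tau]]$. Hence the right side of \eqref{eq:gf} is a well-defined element of $\mathbb C[[\tau]]$. Its expansion coincides with the termwise expansion above, because the finite sum $\sum_x \frac{1}{v-r_x}$ agrees with $D_N'/D_N$ as rational functions, and each $\frac{1}{v-r_x}$ expands formally as $-\sum_{n\ge0}v^n/r_x^{n+1}$.

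For the analytic statement, fix $R<\rho:=\min_x r_x/c_{N,m}$ and let $|\tau|\le R$. Then
\[
|V_n(N,m)\tau^n|\le 2M_N(m)\,(R/\rho)^n,
\]
so the Weierstrass M-test gives absolute and uniform convergence on $|\tau|\le R$, and hence on compact subsets of $|\tau|<\rho$. On this disk the right side is holomorphic, since the poles of $D_N'(v)/D_N(v)$ lie at $v=r_x$, i.e. at $|\tau|\ge\rho$. Both sides are therefore holomorphic with equal Taylor coefficients at $0$, so they agree.

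The only step requiring care is this last one: justifying interchange of the finite sum with the geometric series and checking the radius $\min_x r_x/c_{N,m}$. Both follow because the sum over $x$ is finite.
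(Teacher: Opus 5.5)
Your proof is correct and follows the paper's argument. You write $v\,D_N'(v)/D_N(v)=\sum_x v/(v-r_x)$, expand each term as a geometric series in $v/r_x$ for $|v|<\min_x r_x$, and collect powers of $v$; the constant term gives $V_0(N,m)=2M_N(m)$. Your M-test bound $|V_n(N,m)\tau^n|\le 2M_N(m)\bigl(c_{N,m}R/\min_x r_x\bigr)^n$ and your remark that $D_N(c_{N,m}\tau)$ is a unit in $\mathbb C[[\tau]]$ spell out the convergence and formal-series claims, which the paper leaves implicit.
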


\begin{proof}
Since $D_N$ is monic of degree $M_N(m)$, we have that
\[
v\frac{D_N'(v)}{D_N(v)} = \sum_x \frac{v}{v-r_x} = M_N(m) + \sum_x \frac{r_x}{v-r_x}.
\]
Hence
$$
M_N(m)-vD_N'(v)/D_N(v) = -\sum_x \frac{r_x}{v-r_x} = \sum_x\sum_{k\ge0}v^k r_x^{-k-1}\cdot r_x,
$$
which is valid for $|v|$ smaller than every $r_x$.  By collecting powers of $v$,  we get that
$$
M_N(m)-vD_N'(v)/D_N(v) = \sum_{k\ge0}v^k\sum_x r_x^{-k}.
$$
Now, substitute that $v=c_{N,m}\tau$.  Since $2\sum_x r_x^{-n}=c_{N,m}^{-n}V_n(N,m)$, equation \eqref{eq:defn} implies \eqref{eq:gf},
which completes the proof.
\end{proof}

\subsection{Burnside's lemma and bracelet count}
Burnside's lemma is the classical orbit-counting theorem. If a finite group $G$ acts on a finite set $X$, the number of orbits equals the
average number of fixed points of the elements of $G$.  Precisely, if we let $X/G$ be the quotient space of $X$ obtained by the
action of $G$, then
$$
|X/G| \;=\; \frac{1}{|G|}\sum_{g\in G} |\mathrm{Fix}(g)|
\,\,\,\,\,
\textrm{\rm where}
\,\,\,\,\,
\mathrm{Fix}(g):=\{x\in X : g\cdot x = x\}.
$$

We will use Burnside's lemma as follows.  We take $X$ to be the set of $N$-element subsets of $\mathbb Z/m\mathbb Z$, assuming
$m > N$,
and $G=D_m$ to be the dihedral group of order $2m$. We will show that the quantity $r_x$ from \eqref{eq:r_definition} depends only on the $D_m$-orbit of the underlying set
$\{0,x_1,\dots,x_{N-1}\}$, so
we will be interested in bounding the number of $D_{m}$ orbits.  As it turns out, the
orbit count is exactly the classical count of binary bracelets, meaning the number of ways to
place $N$ marked beads among $m$ positions on a necklace up to action by the dihedral group $D_{m}$.
This analysis is given in detail in Lemma \ref{lem:dihedral} below.

The following proposition is the classical fixed weight, binary bracelet count. Equivalently, it counts incongruent cyclic
$N$-gons selected from $m$ equally spaced points, a problem solved by
Gupta~\cite{Gupta1979}.  Shevelev~\cite{Shevelev2004} gave a short proof and
made explicit the bijections with binary configurations and compositions;
the fixed-weight bracelet formulation is treated directly in
\cite{Shevelev2011}.  Via the cyclic-gap map, the same objects are dihedral
compositions of $m$ into $N$ positive parts, connecting the formula with the
composition literature \cite{KnopfmacherRobbins2010,KnopfmacherRobbins2013,HadjicostasZhang2017}; see also \cite{Ha23} for a generalization of the result below.

\begin{proposition} \label{prop:bracelet}
Let $B(N,m)$ be the number of orbits of $N$-element subsets of $\mathbb Z/m\mathbb Z$ under the
dihedral group $D_m$ --- the classical count of binary bracelets with $N$ marked beads among $m$
positions, meaning the number of ways to place $N$ marked beads among $m$ positions on a necklace
up to rotation and flipping. Then $B(N,m)$ is given by
\begin{equation}\label{eq:Burnside_count}
B(N,m) = \frac{1}{2m}\left(\sum_{k=0}^{m-1}\binom{d_k}{Nd_k/m}\mathbf 1_{m/d_k\mid N} + R(N,m)\right),
\qquad d_k := \gcd(m,k),
\end{equation}
with reflection term
$$
R(N,m) = \begin{cases}
m\binom{(m-1)/2}{\lfloor N/2\rfloor}, & m \text{ odd}, \\[4pt]
\dfrac{m}{2}\displaystyle\sum_{f=0}^{2}\binom{2}{f}\binom{(m-2)/2}{(N-f)/2} + \dfrac{m}{2}\binom{m/2}{N/2}\mathbf 1_{2\mid N}, & m \text{ even},
\end{cases}
$$
omitting any term whose lower binomial argument is not a nonnegative integer.
\end{proposition}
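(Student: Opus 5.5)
The plan is to apply Burnside's lemma directly to the action of $D_m$ on the set $X$ of $N$-element subsets of $\ZZ/m\ZZ$. We write $D_m=\{\rho^k,\ \rho^k\sigma : 0\le k\le m-1\}$, where $\rho(j)=j+1$ and $\sigma(j)=-j$. Then
\[
B(N,m)=\frac{1}{2m}\Big(\sum_{k=0}^{m-1}|\mathrm{Fix}(\rho^k)|+\sum_{k=0}^{m-1}|\mathrm{Fix}(\rho^k\sigma)|\Big),
\]
and it remains to identify the two inner sums with the rotation sum and with $R(N,m)$.

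\textbf{Rotations.} The permutation $j\mapsto j+k$ of $\ZZ/m\ZZ$ decomposes into $d_k=\gcd(m,k)$ cycles, namely the cosets of the subgroup generated by $k$, each of length $m/d_k$. A subset is fixed by $\rho^k$ exactly when it is a union of such cycles. This forces $m/d_k\mid N$, and the fixed subsets are then obtained by choosing $N d_k/m$ of the $d_k$ cycles. Hence $|\mathrm{Fix}(\rho^k)|=\binom{d_k}{Nd_k/m}\mathbf 1_{m/d_k\mid N}$. (The case $k=0$ gives $d_0=m$, which is consistent.)

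\textbf{Reflections.} Each reflection $\rho^k\sigma\colon j\mapsto k-j$ is an involution. A subset is fixed exactly when it is a union of orbits, so we need the cycle type. The fixed points are the solutions of $2j\equiv k$.
\begin{itemize}
\item If $m$ is odd, every reflection has exactly one fixed point and $(m-1)/2$ transpositions. A fixed $N$-subset consists of $\lfloor N/2\rfloor$ transpositions, together with the fixed point when $N$ is odd. This gives $\binom{(m-1)/2}{\lfloor N/2\rfloor}$ per reflection, and $m$ reflections, which is the stated $R$.
\item If $m$ is even and $k$ is even ($m/2$ reflections), there are two fixed points and $(m-2)/2$ transpositions. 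Choosing $f\in\{0,1,2\}$ of the fixed points and $(N-f)/2$ transpositions gives $\sum_f\binom{2}{f}\binom{(m-2)/2}{(N-f)/2}$, with nonintegral terms omitted.
\item If $m$ is even and $k$ is odd (the other $m/2$ reflections), there are no fixed points and $m/2$ transpositions. This gives $\binom{m/2}{N/2}\mathbf 1_{2\mid N}$.
\end{itemize}
Summing these cases reproduces $R(N,m)$.

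The only point requiring care, and the main obstacle, is the reflection count for even $m$. There one must check that $2j\equiv k \pmod m$ has exactly two solutions when $k$ is even and none when $k$ is odd, and that each parity class of $k$ contains exactly $m/2$ reflections. Both follow from elementary congruence arguments. The convention of omitting binomials with non-integral lower argument then handles the parity constraints uniformly, and substituting both counts into Burnside's formula yields \eqref{eq:Burnside_count}.
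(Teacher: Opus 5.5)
Your proposal is correct and matches the paper's self-contained Appendix proof: Burnside's lemma over $D_m$, rotations counted via the $d_k$ cycles of length $m/d_k$, and reflections $i\mapsto c-i$ split by the parity of $m$ and, for even $m$, by the parity of $c$, exactly as you do. The main-text proof instead rewrites Gupta's closed form, but the Appendix argument is the one you reproduce.
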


\begin{proof}
  The formula for $B(N,m)$ derived in \cite{Gupta1979} reads as
  \begin{equation}\label{eq:second}
 B(N,m)=\frac{1}{2m}\left(
 \sum_{q\mid\gcd(m,N)}\varphi(q)
 \binom{m/q}{N/q}+R_2(N,m)\right),
\end{equation}
where
\begin{equation}\label{eq:R2}
R_2(N,m)=
\begin{cases}
\displaystyle
m\binom{(m-1)/2}{\lfloor N/2\rfloor},
&m\text{ odd},\\[8pt]
\displaystyle
m\binom{m/2}{N/2},
&m\text{ even and }N\text{ even},\\[8pt]
\displaystyle
m\binom{(m-2)/2}{(N-1)/2},
&m\text{ even and }N\text{ odd}.
\end{cases}
\end{equation}
Here $\varphi$ denotes Euler's totient function. By putting $e=m/d_k$ for $k\neq 0$, noticing that there are exactly $\varphi(e)$ values of $k$ with $d_k=m/e$, and using elementary properties of binomial coefficients it is easy to see that the lead term in \eqref{eq:second} and the error term \eqref{eq:R2} coincide with the lead term in \eqref{eq:Burnside_count} and the error term $R(N,m)$.
\end{proof}

In the appendix we provide a self-contained, novel proof of Proposition \ref{prop:bracelet}.

\section{The resultant construction}\label{sec:resultant}

In this section we derive the auxiliary polynomial $W_N$ from \eqref{eq:W_definition} and prove that it has the properties
needed for our purposes. We then
give a complete, step-by-step proof of the resultant identity for $D_N(w)$.

\subsection{Motivation}\label{subsec:motivation}

The goal is to manufacture $D_N(w)=\prod_{x\in\mathcal A(m,N)}(w-r_x)$ out of finite, exact algebra on the roots of $Q(u):=u^m-1$ alone. Write $\mu_m:=\{u\in\CC: u^m=1\}$ for the group of $m$-th roots of unity, which is the set of roots of $Q$.

\begin{lemma}\label{lem:sign}
Let $u_0=1,u_1,\dots,u_{N-1}$ be pairwise distinct points on the unit circle. Then
\begin{equation}\label{eq:sign_formula}
\Delta(1,u_1,\dots,u_{N-1})^2 \;=\; (-1)^{\binom N2}\,\big|\Delta(1,u_1,\dots,u_{N-1})\big|^2 \prod_{a=1}^{N-1}u_a^{N-1}.
\end{equation}
\end{lemma}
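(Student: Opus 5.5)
The plan is to prove the identity factor by factor, using only that each $u_a$ lies on the unit circle, so $\bar u_a = u_a^{-1}$. Since $|\Delta|^2=\Delta\,\overline{\Delta}$, it suffices to compute $\overline{\Delta}$ and show that
\[
\overline{\Delta(1,u_1,\dots,u_{N-1})} \;=\; (-1)^{\binom N2}\,\Delta(1,u_1,\dots,u_{N-1})\prod_{a=1}^{N-1}u_a^{-(N-1)}.
\]
Multiplying both sides by $\Delta\prod_a u_a^{N-1}$ then gives exactly \eqref{eq:sign_formula}. The hypothesis that the points are pairwise distinct is not needed for this algebraic identity. It only ensures the quantities are nonzero, which matters later.

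For a single pair $a<b$ (with $u_0=1$), conjugation gives
\[
\overline{u_a-u_b}=u_a^{-1}-u_b^{-1}=\frac{u_b-u_a}{u_au_b}=-\frac{u_a-u_b}{u_au_b}.
\]
I will take the product over all $\binom N2$ pairs $0\le a<b\le N-1$. The signs contribute $(-1)^{\binom N2}$.

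The denominators contribute $\prod_{a<b}u_au_b$. Each index $c\in\{0,\dots,N-1\}$ appears in exactly $N-1$ pairs, so this product equals $\prod_{c=0}^{N-1}u_c^{N-1}$. Because $u_0=1$, it reduces to $\prod_{a=1}^{N-1}u_a^{N-1}$. This yields the displayed formula for $\overline{\Delta}$, and multiplying through completes the argument.

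There is no real obstacle here. The only point requiring care is the exponent count: each variable, including $u_0$, appears in exactly $N-1$ of the pairs. The convention $u_0=1$ is what makes the final product run over $a=1,\dots,N-1$ only.
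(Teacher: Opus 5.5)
Your proposal is correct and follows essentially the same route as the paper's proof. You conjugate each factor using $\bar u_a=u_a^{-1}$ to obtain $\overline{\Delta}=(-1)^{\binom N2}\Delta/\prod_{a=1}^{N-1}u_a^{N-1}$, counting each index in exactly $N-1$ pairs and using $u_0=1$, and then combine with $|\Delta|^2=\Delta\overline{\Delta}$; your side remark that pairwise distinctness is not needed for the identity is also accurate.
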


\begin{proof}
Since $|u_a|=1$ for every $a$, $\bar u_a=u_a^{-1}$, so
\[
\overline{\Delta(u)} = \prod_{0\le a<b\le N-1}(\bar u_a-\bar u_b) = \prod_{a<b}\Big(\frac1{u_a}-\frac1{u_b}\Big) = \prod_{a<b}\frac{u_b-u_a}{u_au_b} = (-1)^{\binom N2}\frac{\Delta(u)}{\prod_{a<b}u_au_b}.
\]
Each index $a\in\{0,\dots,N-1\}$ occurs in exactly $N-1$ of the $\binom N2$ pairs, so
$$
\prod_{a<b}u_au_b=\prod_{a=0}^{N-1}u_a^{N-1}=\prod_{a=1}^{N-1}u_a^{N-1}$$
because $u_0=1$. Hence
\[
|\Delta(u)|^2 = \Delta(u)\overline{\Delta(u)} = (-1)^{\binom N2}\frac{\Delta(u)^2}{\prod_{a=1}^{N-1}u_a^{N-1}},
\]
and solving for $\Delta(u)^2$ gives the claim.
\end{proof}

Let us consider Lemma~\ref{lem:sign} at a point $x\in\mathcal A(m,N)$, so $u_a=u_a(x)=e^{2\pi ix_a/m}$ and $r_x=|\Delta(u(x))|^2$.
With this, \eqref{eq:sign_formula} becomes
\[
\Delta\big(1,u_1(x),\dots,u_{N-1}(x)\big)^2 = (-1)^{\binom N2}\,r_x\prod_{a=1}^{N-1}u_a(x)^{N-1}.
\]
One can view this computation as saying that $w=r_x$ is the unique root, in the variable $w$,  of the linear equation
$$
w\prod_a u_a^{N-1}-(-1)^{\binom N2}\Delta(1,u_1,\dots,u_{N-1})^2=0
$$
when $u_a=u_a(x)$. We can re-write this linear equation as \eqref{eq:W_definition}.
Further properties of \eqref{eq:W_definition} are as follows.

\begin{lemma}\label{lem:master}
Let $(u_1,\dots,u_{N-1})\in\mu_m^{N-1}$.  Then we have one of two possibilities.
\begin{enumerate}[label=(\alph*)]
\item If $1,u_1,\dots,u_{N-1}$ are pairwise distinct, then $(u_1,\dots,u_{N-1})$ is a permutation of $u(x)$ for a unique $x\in\mathcal A(m,N)$, and
\[
W_N(u_1,\dots,u_{N-1};w)=0 \iff w=r_x.
\]
\item If two or more of the $1,u_1,\dots,u_{N-1}$ coincide, then
\[
W_N(u_1,\dots,u_{N-1};w)=0 \iff w=0.
\]
\end{enumerate}
\end{lemma}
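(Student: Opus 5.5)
The plan is to use that $W_N$ is affine-linear in $w$ with leading coefficient $\prod_{a=1}^{N-1}u_a^{N-1}$. On $\mu_m^{N-1}$ this coefficient never vanishes, since every $u_a$ is a root of unity. Consequently, for each fixed tuple $(u_1,\dots,u_{N-1})\in\mu_m^{N-1}$, the equation $W_N(u;w)=0$ has exactly one solution in $w$, namely
\[
w=(-1)^{\binom N2}\,\Delta(1,u_1,\dots,u_{N-1})^2\Big/\prod_{a=1}^{N-1}u_a^{N-1}.
\]
The whole lemma reduces to identifying this quotient in each of the two cases.

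For case (a), assume first that $1,u_1,\dots,u_{N-1}$ are pairwise distinct. Write $u_a=e^{2\pi i y_a/m}$ with $y_a\in\{1,\dots,m-1\}$. The $y_a$ are then distinct and nonzero, so sorting them increasingly gives a unique $x\in\mathcal A(m,N)$ with $\{u_1,\dots,u_{N-1}\}=\{u_1(x),\dots,u_{N-1}(x)\}$ as sets. Uniqueness holds because $x$ is determined by this set, and $y\mapsto e^{2\pi i y/m}$ is injective on $\{0,\dots,m-1\}$.

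Next I would check that the quotient above is invariant under permuting $u_1,\dots,u_{N-1}$. The product $\prod_a u_a^{N-1}$ is symmetric. The square $\Delta(1,u_1,\dots,u_{N-1})^2$ is also symmetric in all $N$ arguments, since a permutation only changes $\Delta$ by a sign. Hence the unique root equals the value at $u(x)$ itself, and Lemma~\ref{lem:sign}, applied in the form displayed after its proof, gives that this value is $r_x$.

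For case (b), if two of $1,u_1,\dots,u_{N-1}$ coincide, then some factor $u_a-u_b$ of $\Delta(1,u_1,\dots,u_{N-1})$ vanishes, so $\Delta=0$. The polynomial then reduces to $W_N=w\prod_a u_a^{N-1}$ with a nonzero coefficient, so its unique root is $w=0$.

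There is no real obstacle here. The only points needing care are the bookkeeping in (a): the permutation invariance, which holds because only $\Delta^2$ enters, and the uniqueness of $x$.
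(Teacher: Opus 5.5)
Your proposal is correct and follows the paper's proof. Both arguments use that $W_N$ is linear in $w$ with unique root $(-1)^{\binom N2}\Delta(1,u_1,\dots,u_{N-1})^2/\prod_a u_a^{N-1}$, identify that root with $r_x$ via Lemma~\ref{lem:sign} in case (a), and note that $\Delta=0$ in case (b). You also make explicit three points the paper leaves implicit: the leading coefficient does not vanish on $\mu_m^{N-1}$, the alcove point $x$ is unique, and $\Delta^2$ and $\prod_a u_a^{N-1}$ are invariant under permutation, which is what lets you pass from an arbitrary ordering of the $u_a$ to $u(x)$.
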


\begin{proof}
As a polynomial in $w$, $W_N$ is linear (see \eqref{eq:W_definition}), so for each fixed $(u_1,\dots,u_{N-1})$ it has exactly one root, namely
$$
w=(-1)^{\binom N2}\Delta(1,u_1,\dots,u_{N-1})^2/\prod_a u_a^{N-1}.
$$
In case (a) this root equals $r_x$, by the identity preceding \eqref{eq:W_definition}.
In case (b), $\Delta(1,u_1,\dots,u_{N-1})=0$ because two of the points $1,u_1,\dots,u_{N-1}$ coincide, so a
factor $u_a-u_b$ of $\Delta$ vanishes, so the unique root equals
$w=0$.
\end{proof}

\begin{corollary}\label{cor:count}
Of the $m^{N-1}$ tuples in $\mu_m^{N-1}$, the number in case (a) equals
$(N-1)!\binom{m-1}{N-1}$, and the number in case (b) equals $m^{N-1} - (N-1)!\binom{m-1}{N-1}$.
\end{corollary}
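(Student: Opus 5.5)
We count the tuples directly and use no resultant machinery. Case (a) of Lemma~\ref{lem:master} consists of the tuples $(u_1,\dots,u_{N-1})\in\mu_m^{N-1}$ for which $1,u_1,\dots,u_{N-1}$ are pairwise distinct. Case (b) is the complement of case (a) in $\mu_m^{N-1}$. Since $|\mu_m^{N-1}|=m^{N-1}$, the second assertion follows from the first, and it suffices to count case (a).

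For case (a), write $u_a=e^{2\pi i y_a/m}$ with $y_a\in\{0,1,\dots,m-1\}$; this gives a bijection between $\mu_m$ and $\ZZ/m\ZZ$. Under it, case (a) becomes the set of ordered $(N-1)$-tuples $(y_1,\dots,y_{N-1})$ of pairwise distinct elements of $\{1,\dots,m-1\}$, where excluding $0$ encodes $u_a\neq 1=u_0$. We count these tuples in two ways.

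\emph{Direct count.} There are $m-1$ choices for $y_1$, then $m-2$ for $y_2$, and so on. The total is the falling factorial
\[
(m-1)(m-2)\cdots(m-N+1)=\frac{(m-1)!}{(m-N)!}=(N-1)!\binom{m-1}{N-1},
\]
which is valid because $m>N$.

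\emph{Via the alcove.} Sorting the underlying set $\{y_1,\dots,y_{N-1}\}$ in increasing order produces a unique $x\in\mathcal A(m,N)$. Conversely, each $x$ arises from exactly the $(N-1)!$ orderings of its coordinates, all of which are distinct since the $x_a$ are distinct. This is the uniqueness statement in Lemma~\ref{lem:master}(a), and it gives $(N-1)!\,|\mathcal A(m,N)|=(N-1)!\binom{m-1}{N-1}$. It also proves the cardinality $|\mathcal A(m,N)|=\binom{m-1}{N-1}$ used in the paper.

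There is no real obstacle here. The only point requiring care is that the fibers of the map from case-(a) tuples to $\mathcal A(m,N)$ all have size exactly $(N-1)!$. This holds because the symmetric group $S_{N-1}$ acts freely on tuples with pairwise distinct entries. The count of case-(a) tuples is exactly the multiplicity $(N-1)!$ of each root $r_x$ in the iterated resultant. The case-(b) count, $m^{N-1}-(N-1)!\binom{m-1}{N-1}$, is the exponent of $w$ in $f(w)$ in Main Theorem~(i).
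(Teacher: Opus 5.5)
Your proof is correct and is essentially the paper's argument: the falling-factorial count $(m-1)(m-2)\cdots(m-N+1)=(N-1)!\binom{m-1}{N-1}$ of ordered $(N-1)$-tuples of distinct elements of $\{1,\dots,m-1\}$, with case (b) as the complement. Your second count, via the $(N-1)!$-to-one sorting map onto $\mathcal A(m,N)$, is what the paper records in Remark~\ref{rem:counts} and uses in the proof of Theorem~\ref{thm:resultant}; however, your closing aside is slightly off, since $(N-1)!$ is the contribution of each alcove point, so a distinct root $a$ of $D_N$ occurs in the iterated resultant with multiplicity $(N-1)!\,\mu(a)$, not $(N-1)!$.
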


\begin{proof}
Case (a) tuples are exactly the ordered $(N-1)$-tuples of pairwise distinct elements of $\{1,\dots,m-1\}$, of which there are $(m-1)(m-2)\cdots(m-N+1)=(N-1)!\binom{m-1}{N-1}$.  Case (b) is the complement of case (a) in $\mu_m^{N-1}$, which accounts for the
remaining $m^{N-1}-(N-1)!\binom{m-1}{N-1}$ tuples.
\end{proof}

\begin{remark}\label{rem:counts}
When we divide the number of points in case (a) by $(N-1)!$ we get that
$$
\binom{m-1}{N-1}=|\mathcal A(m,N)|,
$$
as expected.  As part of the proof of Theorem~\ref{thm:resultant}, we show that the
$S_{N-1}$-action on the set of tuples in case (a) is free with
orbits of size $(N-1)!$, with distinct points in $\mathcal A(m,N)$ having disjoint orbits,
so the number of points in case (a) is $(N-1)!\cdot|\mathcal A(m,N)|$, again as expected.
\end{remark}

\begin{remark}\label{rem:rel_size}
The relative size of the two cases in Lemma \ref{lem:master} is not fixed.
Let $\pi_{N,m}$ denote the proportion of points in case (a), so then we have that
$$
\pi_{N,m}:=\#\{\text{case (a)}\}/m^{N-1}=\prod_{k=1}^{N-1}(1-k/m),
$$
which is the quantity familiar from the classical ``birthday problem''.  For example, if $N=N(m)\sim c\sqrt{m}$ for a fixed $c > 0$, we have that
$$
\pi_{N,m} \rightarrow e^{-c^2/2}
\,\,\,\,\,
\textrm{\rm as $m \rightarrow \infty$}.
$$

\end{remark}

\subsection{Computing the resultant}

The following is part (i) of the Main Theorem.

\begin{theorem}\label{thm:resultant}
Let $Q(u):=u^m-1$. Consider the iterated resultant
\[
\widetilde D_N(w) := \Res_{u_1}\Big(Q(u_1),\, \Res_{u_2}\big(Q(u_2),\dots,\Res_{u_{N-1}}(Q(u_{N-1}),W_N(u_1,\dots,u_{N-1};w))\dots\big)\Big).
\]
Then
\begin{equation}\label{eq:resultant_decomposition}
\widetilde D_N(w)= \varepsilon_{N,m}\cdot w^{\,m^{N-1}-M_N(m)(N-1)!}\, D_N(w)^{(N-1)!},
\end{equation}
where $\varepsilon_{N,m}=1$ for every $N\ge3$ and every $m$, and $\varepsilon_{2,m}=(-1)^{m-1}$.
\end{theorem}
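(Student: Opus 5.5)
By Remark~\ref{rem:iterated}, the iterated resultant is a product of $W_N$ over all points. Since $Q(u)=u^m-1$ is monic with the $m$ distinct roots $\mu_m$, Lemma~\ref{lem:resultant-basics}(ii) gives $\Res_{u_{N-1}}(Q(u_{N-1}),W_N)=\prod_{u_{N-1}\in\mu_m}W_N(u_1,\dots,u_{N-1};w)$. This is a polynomial in $u_1,\dots,u_{N-2},w$, and we repeat the step outward in the remaining variables. The outcome is
\[
\widetilde D_N(w)=\prod_{(u_1,\dots,u_{N-1})\in\mu_m^{N-1}} W_N(u_1,\dots,u_{N-1};w).
\]
One point to check is that Lemma~\ref{lem:resultant-basics}(ii) applies at each stage even if the degree in $u_k$ of the inner polynomial is not the generic one. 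This is harmless because the monic polynomial $Q$ is always placed in the first slot, and the product formula depends only on that.

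Next I would factor each term. Writing $W_N=\big(\prod_a u_a^{N-1}\big)(w-\rho(u))$ with $\rho(u)=(-1)^{\binom N2}\Delta(1,u)^2/\prod_a u_a^{N-1}$, Lemma~\ref{lem:master} identifies $\rho(u)$.
\begin{itemize}
\item In case (b), $\rho(u)=0$, so each such tuple contributes a factor $w$. By Corollary~\ref{cor:count} there are $m^{N-1}-(N-1)!M_N(m)$ of them, which gives the power of $w$.
\item In case (a), $\rho(u)=r_x$, where $x$ is the unique alcove point whose coordinates are a permutation of $u$. Here I check that the $S_{N-1}$-action permuting $(u_1,\dots,u_{N-1})$ is free on case (a) tuples. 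Its orbits are exactly the sets of orderings of $\{u_1(x),\dots,u_{N-1}(x)\}$, sorting recovers $x$ uniquely, and distinct $x$ give distinct sets.
\end{itemize}
So each $x\in\mathcal A(m,N)$ arises from exactly $(N-1)!$ tuples, all with the same root $r_x$; this uses that $\Delta^2$ and $\prod u_a^{N-1}$ are symmetric in $u_1,\dots,u_{N-1}$. Hence the case (a) tuples contribute $D_N(w)^{(N-1)!}$.

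It remains to compute the product of the leading coefficients,
\[
C:=\prod_{u\in\mu_m^{N-1}}\prod_{a=1}^{N-1}u_a^{N-1}=\prod_{a=1}^{N-1}\Big(\prod_{\zeta\in\mu_m}\zeta\Big)^{(N-1)m^{N-2}}.
\]
Each $u_a$ runs over $\mu_m$ exactly $m^{N-2}$ times as the other coordinates vary. Since $\prod_{\zeta\in\mu_m}\zeta=(-1)^{m-1}$ (the constant term of $u^m-1$ times $(-1)^m$), we get $C=(-1)^{(m-1)(N-1)^2m^{N-2}}$.
\begin{itemize}
\item For $N=2$ this equals $(-1)^{m-1}$.
\item For $N\ge3$ the exponent is even: if $m$ is odd then $m-1$ is even, and if $m$ is even then $m^{N-2}$ is even.
\end{itemize}
This yields $\varepsilon_{N,m}$ and completes \eqref{eq:resultant_decomposition}. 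The main care points are the multiplicity count (freeness and disjointness of orbits) and this sign parity bookkeeping; the rest is formal.
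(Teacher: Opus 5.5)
Your proposal is correct and follows the paper's proof essentially step for step. You expand the iterated resultant as $\prod_{u\in\mu_m^{N-1}}W_N(u;w)$ via Remark~\ref{rem:iterated}, split the tuples into cases (a) and (b) of Lemma~\ref{lem:master} with the free $S_{N-1}$-action supplying the exponent $(N-1)!$, and get the sign from $\prod_{\zeta\in\mu_m}\zeta=(-1)^{m-1}$ raised to the power $(N-1)^2m^{N-2}$. Factoring each $W_N$ as $\bigl(\prod_a u_a^{N-1}\bigr)(w-\rho(u))$ at the outset, instead of treating the leading coefficient and the order of vanishing at $w=0$ separately, is only a cosmetic repackaging of the same argument.
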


\begin{proof}
Each $u_a$ ranges over the roots of the monic polynomial $Q$. By Remark~\ref{rem:iterated}, when applied with $f_a=Q$ for $a=1,\dots,N-1$ and $g=W_N$,
we get that
\begin{equation}\label{eq:full-product}
\widetilde D_N(w) = \prod_{(u_1,\dots,u_{N-1})\in\mu_m^{N-1}} W_N(u_1,\dots,u_{N-1};w),
\end{equation}
which is a product of $m^{N-1}$ linear factors in $w$.  More specifically,
each factor $W_N(u_1,\dots,u_{N-1};w)$ in \eqref{eq:full-product} is linear in $w$ with lead coefficient
$\prod_{a=1}^{N-1}u_a^{N-1}$.  Therefore,
$\widetilde D_N$ has degree $m^{N-1}$ with leading coefficient equal to
\[
\prod_{(u_1,\dots,u_{N-1})\in\mu_m^{N-1}}\ \prod_{a=1}^{N-1}u_a^{N-1} \;=\; \Big(\prod_{u\in\mu_m}u\Big)^{(N-1)^2m^{N-2}}.
\]
Indeed, for each fixed $a$ and each $u\in\mu_m$, there are $m^{N-2}$ tuples with $u_a=u$, hence the index $a$ contributes $\left(\prod_{u\in\mu_m} u\right)^{(N-1)m^{N-2}}$. Multiplying over the $N-1$ indices gives exponent $(N-1)^2m^{N-2}$.

Now, the constant term in $Q(X)=X^m-1=\prod_{u\in\mu_m}(X-u)$ gives $(-1)^m\prod_{u\in\mu_m}u=-1$, so then
\[
\prod_{u\in\mu_m}u = (-1)^{m-1},
\]
as expected.
Thus, the leading coefficient of $\widetilde D_N$ is $\big((-1)^{m-1}\big)^{(N-1)^2m^{N-2}}$. If $m$ is odd this is $1$ because $m-1$ is even.
If $m$ is even, the exponent $(N-1)^2m^{N-2}$ is even whenever $N\ge3$.  In either case, the leading coefficient is $1$.  It remains to consider
$N=2$, where in this case $m^{N-2}=m^0=1$.  Thus, the leading coefficient is $(-1)^{m-1}$.  With
all this, we have computed  $\varepsilon_{N,m}$.

Let us now compute the order of vanishing at $w=0$.  This is precisely the number of tuples in
case (b) of Lemma~\ref{lem:master}, and that is, indeed, the stated exponent in \eqref{eq:resultant_decomposition}.

It remains to consider the contribution to \eqref{eq:resultant_decomposition} of the tuples from case (a) of Lemma~\ref{lem:master}.
Any such tuple $(u_1,\dots,u_{N-1})$ is non-degenerate  (meaning it satisfies assumptions of case (a) of Lemma~\ref{lem:master}) exactly when $0,u_1,\dots,u_{N-1}$ are pairwise distinct mod $m$.
Equivalently, this occurs when some permutation of $(u_1,\dots,u_{N-1})$ lies in the alcove $\mathcal A(m,N)$.
Thus, the symmetric group $S_{N-1}$ acts on non-degenerate tuples by permuting the labels $u_1,\dots,u_{N-1}$.  Clearly, this action is free  and transitive on the fiber over each alcove point, so every orbit has size exactly $(N-1)!$. Moreover $\Delta(1,u_1,\dots,u_{N-1})^2$, hence $W_N$ itself, is invariant
under this action, being a symmetric function of $u_1,\dots,u_{N-1}$.  Furthermore, all $(N-1)!$ tuples in the orbit of a
given $x$ contribute the identical factor $(w-r_x)$. By collecting the non-degenerate tuples by orbit, we get that
\begin{align*}
 \widetilde D_N(w) &= \varepsilon_{N,m}\cdot w^{\,m^{N-1}-M_N(m)(N-1)!} \prod_{x\in\mathcal A(m,N)}(w-r_x)^{(N-1)!} \\&=\varepsilon_{N,m}\cdot w^{\,m^{N-1}-M_N(m)(N-1)!} D_N(w)^{(N-1)!},
\end{align*}
where we used that the number of degenerate tuples (corresponding to part (b) of Lemma~\ref{lem:master}) is $m^{N-1}-M_N(m)(N-1)!$.

\end{proof}

\begin{corollary}\label{cor:extract}
For $N\ge3$ and $w\ne0$, we have that
\[
D_N(w)^{(N-1)!} = \frac{1}{w^{\,m^{N-1}-M_N(m)(N-1)!}}\ \Res_{u_1}\Big(Q(u_1),\dots,\Res_{u_{N-1}}(Q(u_{N-1}),W_N(\cdot;w))\dots\Big).
\]
\end{corollary}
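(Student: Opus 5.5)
This is a direct rearrangement of Theorem~\ref{thm:resultant}, so the plan is to start from the identity \eqref{eq:resultant_decomposition}, namely
\[
\widetilde D_N(w)= \varepsilon_{N,m}\, w^{\,m^{N-1}-M_N(m)(N-1)!}\, D_N(w)^{(N-1)!},
\]
where $\widetilde D_N(w)$ is exactly the iterated resultant appearing on the right-hand side of the corollary. Both sides are polynomials in $w$, and the identity holds in $\ZZ[w]$, hence for every complex $w$.

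Next I specialize to $N\ge3$. Theorem~\ref{thm:resultant} gives $\varepsilon_{N,m}=1$ for every $m$ in this range. The reason is that the leading coefficient equals $((-1)^{m-1})^{(N-1)^2m^{N-2}}$. This is $1$ when $m$ is odd, and also when $m$ is even, since then $m^{N-2}$ is even because $N-2\ge1$. So the sign factor drops out.

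It remains to divide by the power of $w$. The exponent $e:=m^{N-1}-M_N(m)(N-1)!$ is a nonnegative integer: by Corollary~\ref{cor:count} it counts the case (b) tuples. Hence $w^e$ is a well-defined monomial that is nonzero for every $w\ne0$. Dividing both sides of the specialized identity by $w^e$ at any $w\ne0$ gives exactly the displayed formula.

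I also note that, as polynomials, $w^e$ divides $\widetilde D_N(w)$, so the quotient extends to $w=0$ as the polynomial $D_N(w)^{(N-1)!}$. The restriction $w\ne0$ is only needed to make the literal fraction meaningful.

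There is no real obstacle here. The only point needing care is the parity check ensuring $\varepsilon_{N,m}=1$, which fails for $N=2$, $m$ even. That is why the corollary is stated for $N\ge3$; for $N=2$ one would instead retain the factor $(-1)^{m-1}$.
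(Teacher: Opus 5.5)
Your proposal is correct and follows the paper's route. The paper states this corollary as an immediate consequence of Theorem~\ref{thm:resultant}, obtained by specializing to $N\ge3$ so that $\varepsilon_{N,m}=1$ (from the parity of $(N-1)^2m^{N-2}$ when $m$ is even) and dividing by the monomial $w^{\,m^{N-1}-M_N(m)(N-1)!}$, which is nonzero for $w\ne0$. Your extra remark that this exponent is nonnegative by Corollary~\ref{cor:count}, so the quotient is the polynomial $D_N(w)^{(N-1)!}$ even at $w=0$, is a correct and harmless refinement.
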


\subsection{Comparing with \cite{JKS26}}
The resultant construction in \cite{JKS26} for $\textrm{\rm SU}(3)$ was different because we introduced
a different auxiliary polynomial $W$.  Let us now compare the two constructions.  The end result is the
following.  Let $D^{(1)}(w)$ be the resultant constructed in \cite{JKS26} and let $D_3(w)$ be the resultant computed
for $\textrm{\rm SU}(3)$,
then
\begin{equation}\label{eq:resultant_comparisons}
D^{(1)}(w) \;=\; 4^{-\binom{m-1}{2}}\, D_3(4w^2)
\,\,\,\,\,
\textrm{\rm or}
\,\,\,\,\,
D_3(w) \;=\; 4^{\binom{m-1}{2}}\, D^{(1)}\!\left(\frac{\sqrt{w}}{2}\right).
\end{equation}

One of the first differences to note between \cite{JKS26} and the above computations is that
we employed different polynomials $Q$.  In this paper, the elimination theory applied to the
roots of unity, meaning that we set $Q(u) = u^m - 1$ while fixing the basepoint $x_0 := 0$ in the alcove.
In \cite{JKS26}, we came from the point of view of a circulant graph, which led to considering
the so-called twisted roots of
$Q(u) = u^m - i^m$ whose roots $z_j = i\zeta_m^j$ implement the character twist
$\beta = (m/4,m/4,m/4)$ directly into the choice of $Q$ and the constraint that three
roots of unity satisfy $z_{j_1}z_{j_2}z_{j_3} = -i$.

More specifically,
let $W_{N}$ denote the polynomial in this paper, and $W$ denote the polynomial from \cite{JKS26}.  Recall
that
$$
W(u,v,w) = iu^2v^2 + u^2v + uv^2 - 2wuv + u+v-i.
$$
The polynomial $W$
is linear in $w$ with lead coefficient $-2uv$, and its root is instead the signed quantity
$r_j := \mathrm{Re}(u)+\mathrm{Re}(v)+\mathrm{Re}(z_3) = 3\Lambda_{m,\beta}(j)$, a sum of cosines.
So, by comparing when $N=3$, the roots $\{r_{x}\}$ of $W_{N}$ correspond to the roots $\{r_{j}\}$ of $W$
through the relation $r_x = 4r_j^2$.  The lead coefficient is obtained by computing the product of coefficients
from $W$ and the computation in given in Theorem \ref{thm:resultant}, which can also be written as
$$
4^{\binom{m-1}{2}} = 2^{(m-1)(m-2)}.
$$

\section{The unreduced recursion}\label{sec:recursion}

In the previous section we proved part (i) of the Main Theorem.  In this section we will prove parts
(ii), (iii) and (iv).

\begin{theorem}\label{thm:recursion}
Fix $N\geq 2$ and $m> N$. Recall that
$M_N(m) = \binom{m-1}{N-1}$ and $c_{N,m} = Nm^{N-1}$.
Let
$$
Q_N(\tau):=D_N(c_{N,m}\tau)
$$
and
$$
P_N(\tau):=2\big(M_N(m)D_N(c_{N,m}\tau)-c_{N,m}\tau D_N'(c_{N,m}\tau)\big),
$$
which are polynomials in $\tau$ of degrees $M_N(m)$ and at most $M_N(m)-1,$ respectively. Then
\[
\sum_{n\ge0}V_n(N,m)\tau^n = \frac{P_N(\tau)}{Q_N(\tau)}.
\]
Furthermore, by writing
$$
Q_N(\tau)=\sum_k q_k\tau^k
\,\,\,\,\,
\textrm{\rm and}
\,\,\,\,\,
P_N(\tau)=\sum_k p_k\tau^k,
$$
then the sequence $V_n(N,m)$ of Verlinde
sums for fixed $N$ and $m$ satisfy the linear recursion
\[
\sum_{k=0}^{\min(n,M_N(m))} q_k\,V_{n-k}(N,m) = \begin{cases} p_n, & 0\le n\le M_N(m),\\ 0, & n>M_N(m).\end{cases}
\]
of order at most $M_N(m)$.
\end{theorem}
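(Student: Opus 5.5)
The plan is to derive everything from Proposition~\ref{prop:gf} by clearing denominators. First we check the degree claims. Since $D_N$ is monic of degree $M_N(m)$ with integer coefficients, $Q_N(\tau)=D_N(c_{N,m}\tau)$ is a polynomial of degree exactly $M_N(m)$ with leading coefficient $c_{N,m}^{M_N(m)}\neq0$. For $P_N$, write $D_N(v)=\sum_{j=0}^{M}d_jv^j$ with $M=M_N(m)$ and $d_M=1$. Then
\[
M D_N(v)-vD_N'(v)=\sum_{j}(M-j)d_jv^j,
\]
and the $v^M$ coefficient cancels. Substituting $v=c_{N,m}\tau$ shows that $P_N$ has degree at most $M_N(m)-1$. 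The same formula gives the explicit coefficients $p_k=2(M-k)d_kc_{N,m}^k$, although we will not need them.

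Next we obtain the rational generating function. By Proposition~\ref{prop:gf},
\[
\sum_{n\ge0}V_n(N,m)\tau^n = 2\Big(M-\frac{vD_N'(v)}{D_N(v)}\Big)\Big|_{v=c_{N,m}\tau}=\frac{2\big(MD_N(v)-vD_N'(v)\big)}{D_N(v)}\Big|_{v=c_{N,m}\tau}=\frac{P_N(\tau)}{Q_N(\tau)}.
\]
This holds in $\CC[[\tau]]$. The division is legitimate because $Q_N(0)=D_N(0)=\prod_x(-r_x)\neq0$ by Lemma~\ref{lem:vandermonde}, so $Q_N$ is invertible in $\CC[[\tau]]$. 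Analytically, the identity holds on the disc $|\tau|<\min_x r_x/c_{N,m}$.

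For the recursion, we multiply through to get the formal identity $Q_N(\tau)\sum_n V_n\tau^n=P_N(\tau)$. Comparing coefficients of $\tau^n$ via the Cauchy product gives
\[
\sum_{k=0}^{\min(n,M)}q_kV_{n-k}(N,m)=p_n,
\]
since $q_k=0$ for $k>M$. For $n>M$, and indeed already for $n\ge M$, we have $p_n=0$ because $\deg P_N\le M-1$. This yields the stated piecewise form; the $n=M$ case being $0$ is consistent with $p_M=0$. Finally, $q_0=D_N(0)\ne0$, so the relation can be solved for $V_n$ in terms of the previous $M$ terms. It is therefore a genuine linear recursion of order at most $M_N(m)$.

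No step is a real obstacle. The only points needing care are the cancellation of the top coefficient in $P_N$, which uses that $D_N$ is monic of degree $M$, and the invertibility of $Q_N$, which needs $r_x>0$.
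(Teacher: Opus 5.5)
Your proposal is correct and follows the same route as the paper, which also derives the result from Proposition~\ref{prop:gf} by clearing the denominator $D_N(c_{N,m}\tau)$ and matching coefficients of $\tau^n$. Your version supplies the details the paper leaves implicit: the cancellation of the top coefficient of $P_N$, the nonvanishing of $q_0=D_N(0)$, and the observation that $p_M=0$.
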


\begin{proof}
The proof is immediate from Proposition \ref{prop:gf} upon clearing the denominator $D_N(c_{N,m}\tau)$ and matching coefficients of $\tau^n$.
\end{proof}

\begin{remark}
The recursion above has order $\leq M_N(m)=\binom{m-1}{N-1}$, which grows quickly in $m$.  In section \ref{sec:reduction}
we will study how to shorten the recursion length.
\end{remark}

\begin{example}[$N=3$, $m=4$]\label{ex:N3m4}
Here $\binom{3}{2}=3$, so $(-1)^{\binom{3}{2}}=-1$ and
$$
W_3(u_1,u_2;w) = w\,u_1^2u_2^2 + \Delta(1,u_1,u_2)^2
= w\,u_1^2u_2^2 + \bigl[(1-u_1)(1-u_2)(u_1-u_2)\bigr]^2.
$$
The alcove $A(4,3)=\{(1,2),(1,3),(2,3)\}$ has $3=\binom{3}{2}$ points. At $x=(1,2)$, so $u_1=i$, $u_2=-1$, and
$$
\Delta(1,i,-1) = (1-i)(1-(-1))(i-(-1)) = (1-i)\cdot 2\cdot(1+i) = 2(1-i^2)=4,
$$
so $r_{(1,2)}=|\Delta|^2=16$. Indeed, $W_3(i,-1;16)=16\cdot i^2(-1)^2+4^2=-16+16=0$, confirming $w=16$ is the
root predicted by Lemma \ref{lem:sign}.
The same computation at $x=(1,3),(2,3)$ also gives $r_x=16$, so all three alcove points coincide,
forcing $D_3(w)=(w-16)^3$.

Let us check this computation using Theorem \ref{thm:resultant}.  Since $(N-1)!=2$, $M_3(4)=\binom{3}{2}=3$, $m^{N-1}=16$, so the exponent of $w$
in Theorem \ref{thm:resultant} is
$16-3\cdot 2=10$. Thus, the iterated resultant gives
$$
\widetilde D_3(w) = w^{10}(w-16)^6,
$$
exactly matching $\varepsilon_{3,4}\,w^{10}D_3(w)^2$ with $\varepsilon_{3,4}=1$ and $D_3(w)=(w-16)^3$.
\end{example}

\begin{example}[$N=4$, $m=5$]\label{ex:N4m5}
Here $\binom{4}{2}=6$, so $(-1)^{\binom{4}{2}}=1$ and
$$
W_4(u_1,u_2,u_3;w) = w\,u_1^3u_2^3u_3^3 - \Delta(1,u_1,u_2,u_3)^2.
$$
The alcove $A(5,4)=\{(1,2,3),(1,2,4),(1,3,4),(2,3,4)\}$ has $4=\binom{4}{3}$ points. At $x=(1,2,3)$, with $\zeta:=e^{2\pi i/5}$ and $u_a=\zeta^a$,
one computes
$$
\Delta(1,u_1,u_2,u_3)^2 = 125\,\zeta^3,
$$
so $r_{(1,2,3)}=|\Delta|^2=125$. Indeed, $W_4(u_1,u_2,u_3;125)=125\cdot\zeta^{3(1+2+3)}-125\zeta^3=125\zeta^{18}-125\zeta^3=0$
since $\zeta^{18}=\zeta^3$, confirming $w=125$ is the root predicted by Lemma \ref{lem:sign}. The same computation at the
other three alcove points also gives $r_x=125$, so all four alcove points coincide, thus
$$
D_4(w) = (w-125)^4 = w^4-500w^3+93750w^2-7812500w+244140625.
$$

Let us check this computation using Theorem \ref{thm:resultant}.  Since $(N-1)!=6$, $M_4(5)=\binom{4}{3}=4$, $m^{N-1}=125$,
so the predicted power of $w$ is $125-4\cdot 6=101$. Direct symbolic computation of the iterated resultant gives
$$
\widetilde D_4(w) = w^{101}(w-125)^{24},
$$
exactly matching $\varepsilon_{4,5}\,w^{101}D_4(w)^6$ with $\varepsilon_{4,5}=1$.
\end{example}

\begin{example}[$N=6$, $m=8$]\label{ex:N6m8}
Here $\binom{6}{2}=15$, so $(-1)^{\binom{6}{2}}=-1$ and
$$
W_6(u_1,\dots,u_5;w) = w\prod_{a=1}^5 u_a^5 + \Delta(1,u_1,\dots,u_5)^2.
$$
The alcove $A(8,6)$ has $M_6(8)=\binom{7}{5}=21$ points. At $x=(1,2,3,4,6)$, with $\zeta:=e^{2\pi i/8}$ and $u_a=\zeta^{x_a}$, one computes
$$
\Delta(1,u_1,\dots,u_5)^2 = -8192,
$$
so $r_x=|\Delta|^2=8192$.  Specifically, $W_6(u_1,\dots,u_5;8192)=8192\cdot\zeta^{5(1+2+3+4+6)}+(-8192)=8192\cdot\zeta^{80}-8192=0$
since $80\equiv 0\pmod 8$, confirming $w=8192$ is the root predicted by Lemma \ref{lem:sign}. Unlike the two previous examples,
the $21$ alcove points do not all coincide.  A direct evaluation of $\{r_x\}_{x\in A(8,6)}$ shows that
\begin{equation}\label{eq:N6m8_D_formula}
D_6(w) = (w-8192)^6(w-16384)^3\bigl(w^2-16384w+33554432\bigr)^6,
\end{equation}
which is a monic degree-$21$ polynomial (note that $6+3+2\cdot 6=21=M_6(8)$), whose irreducible quadratic factor has roots
$8192\mp 4096\sqrt2$.

For the computational check, $(N-1)!=120$, $m^{N-1}=8^5=32768$, so the power of $w$ is $32768-21\cdot 120=30248$. Direct symbolic computation of the iterated resultant gives
$$
\widetilde D_6(w) = w^{30248}(w-8192)^{720}(w-16384)^{360}\bigl(w^2-16384w+33554432\bigr)^{720},
$$
matching $\varepsilon_{6,8}\,w^{30248}D_6(w)^{120}$ with $\varepsilon_{6,8}=1$.
\end{example}

\begin{remark}
Using the computations of Examples \ref{ex:N3m4}, \ref{ex:N4m5} and \ref{ex:N6m8}, one can employ
Theorem \ref{thm:recursion} to compute the Verlinde sums in all of these cases.  Example \ref{ex:N3m4}
confirms, by different means, the evaluations of Verlinde sums when $N=3$ and $m=4$ which was
given in \cite{JKS26}.
\end{remark}

\begin{theorem}\label{thm:binet}
Let $\{a\}$ denote the distinct values among $\{r_x\}$ for $x\in\mathcal A(m,N)$ counted with multiplicity $\mu(a)$. For every $n\ge1$,
\[
V_n(N,m) = 2\big(Nm^{N-1}\big)^{n}\sum_a \mu(a)\,a^{-n} = 2\sum_a \mu(a)\,y_a^{\,n}
\,\,\,\,\,
\textrm{\rm where}
\,\,\,\,\,
y_a:=\frac{Nm^{N-1}}{a}.
\]
\end{theorem}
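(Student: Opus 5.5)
The plan is to start directly from the definition \eqref{eq:defn}, $V_n(N,m)=2(Nm^{N-1})^n\sum_{x\in\mathcal A(m,N)} r_x^{-n}$, and regroup the finite sum according to the value of $r_x$. By Lemma~\ref{lem:vandermonde} every $r_x$ is a strictly positive real number, so $r_x^{-n}$ is well defined for all $n\ge1$ (indeed for all $n\ge0$). The sets $\{x\in\mathcal A(m,N): r_x=a\}$, as $a$ runs over the distinct values, partition $\mathcal A(m,N)$. Hence
\[
\sum_{x\in\mathcal A(m,N)} r_x^{-n}=\sum_a \sum_{x:\,r_x=a} a^{-n}=\sum_a \mu(a)\,a^{-n}.
\]
Multiplying by $2(Nm^{N-1})^n$ gives the first equality. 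Absorbing $(Nm^{N-1})^n a^{-n}=(Nm^{N-1}/a)^n=y_a^n$ gives the second.

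As a consistency check, and to tie the statement to the factorization of $D_N$, I would also note an alternative route through Proposition~\ref{prop:gf}. Grouping equal roots, $D_N(w)=\prod_a (w-a)^{\mu(a)}$, so $vD_N'(v)/D_N(v)=\sum_a \mu(a)\,v/(v-a)$. Expanding geometrically for $|v|<\min_a a$ and setting $v=c_{N,m}\tau$, the coefficient of $\tau^n$ in $2\bigl(M_N(m)-vG_N(v)\bigr)$ is $2\sum_a\mu(a)(c_{N,m}/a)^n$. This identifies the $\mu(a)$ as the multiplicities of the roots of $D_N$, and the $y_a$ as reciprocals of the roots of $Q_N(\tau)=D_N(c_{N,m}\tau)$, i.e.\ as the characteristic roots of the recursion in Theorem~\ref{thm:recursion}.

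There is no genuine obstacle; the argument is a regrouping of a finite sum. The only points needing care are that every $r_x\neq0$, which is Lemma~\ref{lem:vandermonde}, and that $\mu(a)$ counts alcove points rather than multiplicities in the resultant $\widetilde D_N$. The latter would be inflated by the factor $(N-1)!$ from Theorem~\ref{thm:resultant}, so $\mu(a)$ must be read off from $D_N$ itself, after the root extraction of Corollary~\ref{cor:extract}.
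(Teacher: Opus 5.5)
Your proposal is correct and matches the paper's proof: both regroup the finite sum $\sum_x r_x^{-n}$ in \eqref{eq:defn} by the distinct values of $r_x$, which are nonzero by Lemma~\ref{lem:vandermonde}. Your generating-function cross-check via Proposition~\ref{prop:gf} is valid but not needed for the proof.
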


\begin{proof}
The proof is immediate from \eqref{eq:defn} when grouping the sum $\sum_x r_x^{-n}$ by the distinct value of $r_x$.
\end{proof}

\begin{remark}
The computations in Example \ref{ex:N6m8} combine with Theorem \ref{thm:binet} to give a precise formula
for Verlinde sums in the case $N=6$ and $m=8$.  Specifically, when factoring \eqref{eq:N6m8_D_formula}
we get the succinct formula that
$$
V_n(6,8) = 2\Bigl[\,6\cdot 24^n + 3\cdot 12^n + 6(48+24\sqrt2)^n + 6(48-24\sqrt2)^n\,\Bigr]
\,\,\,\,\,
\textrm{\rm for any $n\ge 1$.}
$$
This example will be fully developed below, as well as other instances of the Main Theorem.
\end{remark}

\section{Reduction of the length of the recursion}\label{sec:reduction}

Note that in the examples in Section \ref{sec:recursion} all factors in polynomials $D_{N}$ had relatively
large exponents, so in fact the length of the recursive formulas for these Verlinde sums is shorter than
$M_N(m)$. The purpose of this section is to obtain a better bound for the length of the recursive sequence.

\subsection{The field bound}

\begin{lemma}\label{lem:galois}
For every $t\in(\ZZ/m\ZZ)^\times$, the map $\sigma_t:\zeta_m\mapsto\zeta_m^t$ permutes the $N$-element subsets of $\ZZ/m\ZZ$ containing zero and hence induces a permutation of $\mathcal A(m,N)$.  Write
$\sigma_t\cdot x$ for the unique alcove point whose underlying set is $\{tx_a \bmod m\}$.  Then one has that $r_{\sigma_t\cdot x}=\sigma_t(r_x)$.
Consequently $D_N(w)\in\ZZ[w]$, and every root of $D_N$ lies in $\QQ(\zeta_m)^+$, so every irreducible factor of $D_N$ over $\QQ$ has degree
dividing $\varphi(m)/2$.
\end{lemma}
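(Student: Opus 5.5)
The plan is to reduce everything to the behaviour of the $N$-element subset $S_x:=\{0,x_1,\dots,x_{N-1}\}\subset\ZZ/m\ZZ$ under multiplication by units. The key point is that $r_x$ depends only on $S_x$ and is a symmetric function of its elements.

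\textbf{Step 1: $r_x$ as a function of $S_x$.} First I will write
\[
r_x=\prod_{\{s,s'\}\subset S_x,\ s\neq s'}\bigl(2-\zeta_m^{s-s'}-\zeta_m^{s'-s}\bigr).
\]
This is exactly $\prod_{a<b}|u_a-u_b|^2$, using $|u_a-u_b|^2=2-u_a\bar u_b-\bar u_au_b$ from the proof of Lemma~\ref{lem:vandermonde}. The expression depends only on the unordered set $S_x$, and each factor is symmetric under $s\leftrightarrow s'$.

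\textbf{Step 2: the permutation of the alcove.} For $t\in(\ZZ/m\ZZ)^\times$, multiplication by $t$ is a bijection of $\ZZ/m\ZZ$ fixing $0$. It therefore sends $N$-subsets containing $0$ to $N$-subsets containing $0$, and it is invertible (multiply by $t^{-1}$). Each such subset $S$ corresponds to exactly one alcove point, namely its nonzero elements listed in increasing order in $\{1,\dots,m-1\}$. So $x\mapsto\sigma_t\cdot x$ is a well-defined permutation of $\mathcal A(m,N)$.

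\textbf{Step 3: Galois equivariance.} Applying the automorphism $\sigma_t$ of $\QQ(\zeta_m)$ to the product in Step 1 replaces each $\zeta_m^{\pm(s-s')}$ by $\zeta_m^{\pm t(s-s')}$. This is precisely the product formula for the set $tS_x=S_{\sigma_t\cdot x}$, which gives $r_{\sigma_t\cdot x}=\sigma_t(r_x)$. It follows that every $\sigma_t$ permutes the multiset $\{r_x\}$, so the coefficients of $D_N(w)=\prod_x(w-r_x)$ are fixed by $\mathrm{Gal}(\QQ(\zeta_m)/\QQ)$ and hence lie in $\QQ$. Since each $r_x$ is an algebraic integer (Lemma~\ref{lem:vandermonde}), the coefficients are also algebraic integers, so $D_N\in\ZZ[w]$.

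\textbf{Step 4: the field and the degree bound.} Each $r_x$ lies in $\QQ(\zeta_m)$ and is real by Lemma~\ref{lem:vandermonde}, so it lies in $\QQ(\zeta_m)\cap\mathbb R=\QQ(\zeta_m)^+$. Alternatively, each factor $2-\zeta^k-\zeta^{-k}$ is visibly fixed by $\sigma_{-1}$. Any monic irreducible factor of $D_N$ over $\QQ$ is the minimal polynomial of some root $r_x$, so its degree is $[\QQ(r_x):\QQ]$. This divides $[\QQ(\zeta_m)^+:\QQ]=\varphi(m)/2$ by the tower law; for $m\le2$ the expression is degenerate, but $m>N\ge2$ already gives $m\ge3$.

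The only mildly delicate point is the bookkeeping in Step 2, namely that multiplication by $t$ is compatible with the choice of basepoint $x_0=0$ and with reordering into the alcove. Once $r_x$ is written in the set-symmetric form of Step 1, this bookkeeping is harmless.
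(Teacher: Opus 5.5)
Your proposal is correct and follows essentially the same route as the paper. Multiplication by a unit permutes the zero-containing $N$-subsets; applying $\sigma_t$ to the integer-polynomial expression for $r_x$ gives $r_{\sigma_t\cdot x}$; Galois stability of the multiset puts $D_N$ in $\QQ[w]$, integrality puts it in $\ZZ[w]$, and reality of $r_x$ gives the degree bound. Writing $\bar u_a=\zeta_m^{-x_a}$, so that termwise application of $\sigma_t$ is unambiguous, and using the tower law in place of the paper's Galois-orbit count are only cosmetic refinements.
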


\begin{proof}
Multiplication by $t$ for $t\in(\ZZ/m\ZZ)^\times$
is a bijection of $\ZZ/m\ZZ$ fixing $0$, so the map carries the underlying $N$-element set $\{0,x_1,\dots,x_{N-1}\}$ to
another $N$-element subset of $\ZZ/m\ZZ$ containing $0$, which after sorting is the underlying set of a unique point
$\sigma_t\cdot x\in\mathcal A(m,N)$.  Since $r_x=\prod_{a<b}|e^{2\pi ix_a/m}-e^{2\pi ix_b/m}|^2$ is a polynomial with
integer coefficients in the $\zeta_m$-powers $e^{2\pi ix_a/m}$ and their conjugates.  By applying $\sigma_t$ termwise
gives that $\sigma_t(r_x)=r_{\sigma_t\cdot x}$. Thus $\{r_x\}_{x\in\mathcal A(m,N)}$ is stable, as a multiset with multiplicity,
under the full Galois group $\mathrm{Gal}(\QQ(\zeta_m)/\QQ)$, so $D_N(w)=\prod_x(w-r_x)$ has coefficients fixed by this
group.  Hence $D_N(w)\in\QQ[w]$.  Since $D_{N}(w)$ is monic with algebraic integer roots, by Lemma~\ref{lem:vandermonde},
$D_N(w)\in\ZZ[w]$. Since $r_x$ is real, $r_x\in\QQ(\zeta_m)^+$, and $\mathbb Q(\zeta_m)^+/\mathbb Q$ is a finite Galois extension, the degree of the minimal polynomial of $r_x$ (which is an irreducible factor of $D_N$) equals the size of its Galois orbit, hence divides $[\mathbb Q(\zeta_m)^+:\mathbb Q]=\varphi(m)/2$.
\end{proof}

\subsection{Dihedral symmetry}\label{subsec:dihedral}

The Galois action of Lemma~\ref{lem:galois} only produces conjugate values. It does not, by itself, identify when any two values
in $\{r_x\}$ are equal. We now study when $r_x$ is invariant under rotating and reflecting the $m$-point discrete circle acting on the configuration
$\{0,x_1,\dots,x_{N-1}\}$.  In other words, we are studying the fixed points of the action by the dihedral group $D_{m}$.

\begin{lemma}\label{lem:dihedral}
Extend $r_x$ to a function $\rho$ of arbitrary $N$-element subsets $T\subset\ZZ/m\ZZ$ by
$$
\rho(T):=|\Delta(e^{2\pi it/m}:t\in T)|^2,
$$
so $r_x=\rho(\{0,x_1,\dots,x_{N-1}\})$. Then for every $c\in\ZZ/m\ZZ$ and $\varepsilon\in\{\pm1\}$,
\[
\rho(\varepsilon T+c) = \rho(T)
\,\,\,\,\,
\textrm{\rm where}
\,\,\,\,\,
\varepsilon T+c := \{(\varepsilon t+c)\bmod m : t\in T\}.
\]
\end{lemma}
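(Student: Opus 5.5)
The plan is to write $\rho(T)$ as a product over unordered pairs, $\rho(T)=\prod_{\{s,t\}\subset T,\,s\neq t}|e^{2\pi is/m}-e^{2\pi it/m}|^2$. This form needs no ordering of $T$, and the value of $|\Delta|^2$ does not depend on the ordering anyway, since reordering changes $\Delta$ only by a sign. Each factor depends only on the difference $s-t \bmod m$, up to sign:
\[
|e^{2\pi is/m}-e^{2\pi it/m}|^2 = 4\sin^2\tfrac{\pi(s-t)}{m},
\]
and this is even in $s-t$ and $m$-periodic. It is therefore a well-defined function $h(d)$ of $d=s-t\in\ZZ/m\ZZ$ satisfying $h(-d)=h(d)$. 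Consequently $\rho(T)=\prod_{\{s,t\}}h(s-t)$ is a function of the multiset of unordered pairwise differences $\pm(s-t)$.

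Next, the map $\phi(t)=\varepsilon t+c$ is a bijection of $\ZZ/m\ZZ$. It therefore induces a bijection from the unordered pairs of $T$ onto the unordered pairs of $\varepsilon T+c$, and the sets have the same size $N$. For a pair $\{s,t\}$ we have $\phi(s)-\phi(t)=\varepsilon(s-t)$, so by evenness $h(\phi(s)-\phi(t))=h(s-t)$. Taking the product over all pairs gives $\rho(\varepsilon T+c)=\rho(T)$.

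An alternative, more conceptual route checks the two generators directly. Rotation by $c$ multiplies every $u_a$ by the unimodular number $\zeta_m^c$, which scales $\Delta$ by $\zeta_m^{c\binom N2}$ and leaves $|\Delta|$ unchanged. Reflection $\varepsilon=-1$ replaces each $u_a$ by $\bar u_a$, which conjugates $\Delta$ and so preserves $|\Delta|^2$. I would present the pair-product argument as the main proof and mention this as a remark. The only point requiring care is well-definedness modulo $m$ and independence of ordering; beyond that no step is a real obstacle.
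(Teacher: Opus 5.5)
Your proof is correct, but your main argument differs from the paper's; the paper's proof is essentially your closing remark. The paper works directly with the points $u_t=e^{2\pi i t/m}$ and checks the two generators separately. Translation multiplies every difference $u_s-u_t$ by the common unimodular factor $e^{2\pi i c/m}$. The reflection $t\mapsto -t$ replaces each $u_t$ by $\overline{u_t}$, so it conjugates every difference. The general map $\varepsilon T+c$ then follows by composing these two.

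You instead factor $\rho$ through the even, $m$-periodic function $h(d)=4\sin^2(\pi d/m)$. This shows that $\rho(T)$ depends only on the multiset of unordered differences $\pm(s-t)$. Since any map $t\mapsto\varepsilon t+c$ visibly preserves that multiset, all of $D_m$ is handled at once, with no composition step. You also treat the ordering ambiguity in $\Delta$ explicitly, which the paper leaves implicit. The paper's version has the advantage of staying with the algebraic numbers $u_t$: its reflection step is just the case $t=-1$ of Lemma~\ref{lem:galois}, since $r_x$ is real.

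What your version gains is a strictly stronger invariance. Any two $N$-subsets with the same difference multiset (homometric sets) have the same $\rho$, even when they are not dihedrally equivalent. This gives an elementary explanation of the coincidence in Example~\ref{ex:resonance-N4m13}. Both $\{0,1,4,6\}$ and $\{0,1,3,9\}$ are perfect difference sets modulo $13$: each realizes every class $\pm d$, $1\le d\le 6$, exactly once. Hence both have $\rho=\prod_{d=1}^{6}4\sin^2(\pi d/13)=\prod_{d=1}^{12}2\sin(\pi d/13)=13$.
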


\begin{proof}
Write $u_t:=e^{2\pi it/m}$. Translation by $\varepsilon=1$ gives $u_{t+c}=e^{2\pi ic/m}u_t$, so $u_{t+c}-u_{t'+c}=e^{2\pi ic/m}(u_t-u_{t'})$
picks up a common factor which has modulus one, so $\rho(T+c)=\rho(T)$. Reflection  by $\varepsilon=-1$ with $c=0$ gives
$u_{-t}=\overline{u_t}$, so $u_{-t}-u_{-t'}=\overline{u_t-u_{t'}}$, and $|\overline z|=|z|$ implies $\rho(-T)=\rho(T)$. The general case
follows by considering compositions of these actions.
\end{proof}

\begin{theorem}\label{thm:minrec}
Let $K_{\min}(N,m)$ denote the number of distinct values among $\{r_x\}_{x\in\mathcal A(m,N)}$. The minimal-order linear recursion with constant coefficients annihilating $(V_n(N,m))_{n\ge1}$ has order exactly $K_{\min}(N,m)$.
\end{theorem}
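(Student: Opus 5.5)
By Theorem~\ref{thm:binet}, for every $n\ge1$ we have $V_n(N,m)=2\sum_{a}\mu(a)\,y_a^{\,n}$. Here $a$ runs over the $K:=K_{\min}(N,m)$ distinct values of $r_x$, and $y_a=c_{N,m}/a$. The plan is to show that the annihilating polynomials of the sequence $(V_n)_{n\ge1}$ are exactly the multiples of
\[
\chi(t):=\prod_a (t-y_a).
\]
The upper bound and the lower bound will be handled separately.

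\emph{Upper bound.} Each geometric sequence $(y_a^n)_{n\ge1}$ is annihilated by the shift operator $E-y_a$. Therefore $\chi(E)$ kills $V_n$, which is a linear combination of these sequences. Writing $\chi(t)=t^K+\sum_{j<K}e_jt^j$, we obtain the recursion
\[
V_{n+K}+\sum_{j<K}e_jV_{n+j}=0,\qquad n\ge1,
\]
which has order $K$. Its coefficients are rational: the $y_a$ are the rescaled reciprocals of the distinct roots of $D_N$, and these roots form a union of full Galois orbits by Lemma~\ref{lem:galois}. This is not needed for minimality, but it shows the recursion is defined over $\QQ$.

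\emph{Lower bound.} The key points are the following.
\begin{itemize}
\item The $y_a$ are pairwise distinct, because the $a$ are distinct and nonzero; Lemma~\ref{lem:vandermonde} gives $a>0$.
\item All coefficients $2\mu(a)$ are nonzero.
\item All $y_a$ are nonzero, so no characteristic root at $0$ can be absorbed by the shift.
\end{itemize}
Suppose a recursion $\sum_{j=0}^{L}g_jV_{n+j}=0$ for all $n\ge1$ has order $L<K$, with $g_L\ne0$, and set $g(t)=\sum_j g_jt^j$. Substituting the Binet formula gives
\[
\sum_a 2\mu(a)\,g(y_a)\,y_a^{\,n}=0\qquad\text{for all } n\ge1.
\]
Restricting to $n=1,\dots,K$ yields a linear system whose matrix is $\bigl(y_a^{\,n}\bigr)$. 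This is a Vandermonde matrix times $\mathrm{diag}(y_a)$, and it is invertible because the $y_a$ are distinct and nonzero. Hence $\mu(a)g(y_a)=0$, so $g(y_a)=0$ for every $a$. Then $g$ is a nonzero polynomial of degree $L<K$ with $K$ distinct roots, a contradiction.

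The same argument shows that every annihilating polynomial is divisible by $\chi$. So $\chi$ is the minimal polynomial and the minimal order is exactly $K$.

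The main point needing care is the definition of "order" when $g_0$ may vanish, that is, when a recursion has zero characteristic roots. I would treat the order as $\deg g$, so that an operator with $g_0=0$ is still covered by the argument above. The divisibility statement then settles the claim regardless of convention. The nonvanishing of the $y_a$ and of the multiplicities $\mu(a)$ is what prevents any cancellation.
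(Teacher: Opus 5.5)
Your proof is correct, but it proves the lower bound by a different mechanism than the paper.

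\textbf{The paper's route.} The paper works with the generating function. By Proposition~\ref{prop:gf}, $\sum_n V_n(N,m)\tau^n$ is a sum of $K_{\min}(N,m)$ simple geometric series $2\mu(a)/(1-y_a\tau)$ with distinct poles and nonzero weights. Hence the reduced rational function has a squarefree denominator of degree exactly $K_{\min}(N,m)$, and the minimal order is read off from that denominator. The paper leaves implicit the standard correspondence between recursions and denominators of rational generating functions, and the fact that nonzero residues prevent cancellation.

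\textbf{Your route.} You work directly with the sequence. The upper bound comes from applying $\chi(E)$ to the Binet expansion. The lower bound comes from invertibility of $(y_a^{\,n})_{1\le n\le K}$, that is, linear independence of distinct nonzero geometric sequences. Both arguments rest on the same fact, that $V_n$ is a sum of $K$ distinct geometric sequences with nonzero coefficients. Your Vandermonde step is precisely the justification behind the paper's claim that nothing further cancels in the reduced denominator.

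\textbf{What each buys.} Your version is self-contained. It makes the lower bound explicit, shows that $\chi$ generates the whole annihilator ideal, and shows that $\chi$ has rational coefficients. It also handles recursions valid only from some $n_0$ on, through $\chi(0)\neq 0$. The paper's version is shorter and ties directly to the unreduced form $P_N/Q_N$ of Theorem~\ref{thm:recursion}. There the minimal recursion is obtained by cancelling the common factor $\prod_a(1-y_a\tau)^{\mu(a)-1}$, which explains why the unreduced recursion is so much longer than necessary.
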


\begin{proof}
Write $D_N(w)=\prod_a(w-a)^{\mu(a)}$ over distinct roots $a$, with multiplicities $\mu(a)$. Then $G_N=D_N'/D_N=\sum_a \mu(a)/(w-a)$
has only simple poles, with residue $\mu(a)$. By Proposition \ref{prop:gf}, $M_N(m)-vD_N'(v)/D_N(v) = \sum_a\mu(a)\,v/(a-v)$,
so $\sum_n V_n(N,m)\tau^n$ is, after the substitution $v=c_{N,m}\tau$, a sum of $K_{\min}(N,m)$ simple geometric series.
As such, when combining into a single rational function, the denominator of the reduced rational function  $\sum_n V_n(N,m)\tau^n$ has degree $K_{\min}(N,m)$ and no repeated roots.
\end{proof}

\begin{corollary}\label{cor:kmin-bound}
For fixed $N$ and $m$ we have $K_{\min}(N,m)\le B(N,m)$.
\end{corollary}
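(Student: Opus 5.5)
The plan is to factor the map $x\mapsto r_x$ through the set of $D_m$-orbits of $N$-element subsets of $\ZZ/m\ZZ$, and then bound the number of distinct values by the number of orbits, which is $B(N,m)$.

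First, define the map $\Phi:\mathcal A(m,N)\to X$, where $X$ is the set of $N$-element subsets of $\ZZ/m\ZZ$. It sends $x\mapsto T_x:=\{0,x_1,\dots,x_{N-1}\}$. This map is injective, since an alcove point is recovered from its underlying set by sorting the nonzero elements. By Lemma~\ref{lem:dihedral}, $\rho$ is constant on $D_m$-orbits in $X$, because the elements $T\mapsto\varepsilon T+c$ exhaust the dihedral group $D_m$ acting on $\ZZ/m\ZZ$. Hence $\rho$ descends to a well-defined function $\bar\rho: X/D_m\to\mathbb R_{>0}$. Moreover $r_x=\bar\rho([T_x])$.

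Second, the set of distinct values $\{r_x: x\in\mathcal A(m,N)\}$ equals the image of $\bar\rho$ restricted to the set of orbits that meet $\Phi(\mathcal A(m,N))$. This image has cardinality at most the number of such orbits, hence at most $|X/D_m|$. I will also note that every orbit meets $\Phi(\mathcal A(m,N))$: any $T\in X$ can be translated by $-t_0$ for some $t_0\in T$ so that it contains $0$, and the translate equals some $T_x$. So the count is exactly $|X/D_m|$, although only the inequality is needed. Finally, Proposition~\ref{prop:bracelet} identifies $|X/D_m|$ with $B(N,m)$. Combining this with the definition of $K_{\min}(N,m)$ in Theorem~\ref{thm:minrec} gives $K_{\min}(N,m)\le B(N,m)$.

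The only step needing care is the claim that the transformations $\varepsilon T+c$ generate exactly the $D_m$-action used in the definition of $B(N,m)$. This holds because $D_m$ acts on $\ZZ/m\ZZ$ as the affine maps $t\mapsto \pm t+c$: rotations are $t\mapsto t+c$ and reflections are $t\mapsto -t+c$. The rest is bookkeeping, so I expect no genuine obstacle.
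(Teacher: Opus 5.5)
Your proposal is correct and follows essentially the paper's argument: Lemma~\ref{lem:dihedral} makes $r_x$ a function of the $D_m$-orbit of $\{0,x_1,\dots,x_{N-1}\}$, so the number of distinct values, which is $K_{\min}(N,m)$ by the definition in Theorem~\ref{thm:minrec}, is at most the number of orbits, $B(N,m)$. You are also right that the fact that every orbit meets the alcove, which the paper mentions, is not needed for the inequality.
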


\begin{proof}
By Lemma~\ref{lem:dihedral}, $r_x$ depends only on the $D_m$-orbit of the underlying set
$\{0,x_1,\dots,x_{N-1}\}$, and every $D_m$-orbit of $N$-element subsets of $\mathbb Z/m\mathbb Z$ is
met by some point of $\mathcal A(m,N)$.  Hence,
the number of distinct values among $\{r_x\}_{x\in \mathcal A(m,N)}$ is at most $B(N,m)$, the number of such
orbits computed in Proposition~\ref{prop:bracelet}. Since $K_{\min}(N,m)$ equals this count of
distinct values by Theorem~\ref{thm:minrec} above, the claim follows.
\end{proof}

\begin{remark}
Among our examples below are instances where $B(N,m) \neq K_{\min}(N,m)$; see, for example,
Table~\ref{tab:bracelet} where a representative sample is recorded.  In creating this table,
we also verified Proposition~\ref{prop:bracelet} in two ways.  First, by direct orbit enumeration, matching the closed form exactly
for every $(N,m)$ with $2\le N\le 6$, $N<m\le18$.  Second,  by comparing $B(N,m)$ against $K_{\min}(N,m)$ which was computed from $60$-digit
numerical evaluation of $\{r_x\}$, for $67$ pairs $(N,m)$ in the same range.  The equality $K_{\min}(N,m)=B(N,m)$ holds for $46$ of these $67$,
in particular for every $N=2$ case, and all cases with $m$ prime except pairs $(4,13)$, $(6,13)$ and (6,17).
\end{remark}

\begin{table}[h]
\centering
\begin{tabular}{cccccc}
\toprule
$N$ & $m$ & $M_N(m)$ & $B(N,m)$ & $K_{\min}(N,m)$ & gap \\
\midrule
4 & 7  & 20  & 4   & 4   & 0 \\
4 & 9  & 56  & 10  & 10  & 0 \\
4 & 11 & 120 & 20  & 20  & 0 \\
4 & 12 & 165 & 29  & 25  & 4 \\
4 & 13 & 220 & 35  & 34  & 1 \\
4 & 17 & 560 & 84  & 84  & 0 \\
5 & 9  & 70  & 10  & 10  & 0 \\
5 & 13 & 495 & 57  & 57  & 0 \\
6 & 11 & 252 & 26  & 26  & 0 \\
6 & 13 & 792 & 76  & 74  & 2 \\
\bottomrule
\end{tabular}
\caption{The bracelet bound $B(N,m)$ versus the true minimal recursion order $K_{\min}(N,m)$.}
\label{tab:bracelet}
\end{table}

\begin{remark}\label{rem:growth}
The examples of \S\ref{sec:examples} feature striking collapses.  For example, with $N=5$ and $m=6$, the full iterated resultant has degree $6^4=1296$, whereas $D_5$ has degree $M_5(6)=5$ and only one distinct root, meaning that $K_{\min}(5,6)=1$.   It is natural to ask whether the dihedral reduction of
Proposition~\ref{prop:bracelet} improves the order of growth of the recursion length as a
function of $m$, for $N$ fixed, or merely is constant. The upper bound \eqref{K bound} indicates that savings is at most a constant. For every fixed
$N\ge3$, $K_{\min}(N,m)$ grows at most at the same polynomial rate in $m$ as $M_N(m)$ itself, namely we have that
\begin{equation}\label{K bound}
K_{\min}(N,m)\le \frac{M_N(m)}{2N}+O\bigl(m^{\lfloor N/2\rfloor}\bigr).
\end{equation}
Our computations suggest that this bound is often sharp or nearly sharp, but a matching lower bound for $K_{\min}$ is not proved here.

Let us prove the bound \eqref{K bound}. We start by comparing the lead terms.  Since $\binom{m-1}{N-1}=\frac Nm\binom mN$,
$$
M_N(m) = \frac{N}{m}\binom{m}{N} \qquad\text{equivalently}\qquad \frac{M_N(m)}{2N} =\frac1{2m}\binom mN.
$$
In the sum of Proposition~\ref{prop:bracelet}, the term $k = 0$ has $d_0=\gcd(m,0)=m$, so it
contributes $\binom{d_0}{Nd_0/m}=\binom mN$. By the identity above, this single term, divided by $2m$, equals
$M_N(m)/(2N)$ precisely, not asymptotically.

Let us now show that all other terms are of lower order.
For $k\ne0$, $d_k=\gcd(m,k)$ is a  divisor
of $m$, so $e:=m/d_k$ is an integer $\ge2$, and the term vanishes unless $e\mid N$. Since $N$ is
fixed, $e$ ranges over the divisors of $N$ exceeding $1$.  There are at most $d(N)-1$ divisors, where $d$ is the divisor counting function.
For each such $e$ dividing $m$, exactly $\varphi(e)$ values of $k$ give $d_k=m/e$, with each contributing
$\binom{m/e}{N/e}=O(m^{N/e})$. By writing $p\ge2$ for the smallest prime factor of $N$, the total
non-identity contribution to the first term is therefore $O(m^{N/p})=O(m^{N/2})$, a bounded number of terms
each of size $O(m^{N/2})$. The reflection term
$R(N,m)$ is bounded the same way directly from its closed form in Proposition~\ref{prop:bracelet}.
Indeed, in each parity case the bound is a fixed multiple of $m$ times a binomial coefficient whose lower index is
$O(1)$ and whose upper index is $\Theta(m)$, giving $R(N,m)=O\bigl(m^{\lfloor N/2\rfloor+1}\bigr)$.

When combining these bounds and dividing by $2m$, we get that
$$
B(N,m) \;=\; \frac{M_N(m)}{2N} \;+\; O\bigl(m^{N/2-1}\bigr) \;+\; O\bigl(m^{\lfloor N/2\rfloor}\bigr)
\;=\; \frac{M_N(m)}{2N} + O\bigl(m^{\lfloor N/2\rfloor}\bigr).
$$
Since $K_{\min}(N,m)\le B(N,m)$ this proves \eqref{K bound}. Moreover, this shows that the dihedral reduction of the recurrence order described above asymptotically saves a factor of about $2N$.


This is visible already in Table~\ref{tab:bracelet}: for fixed $N$, the ratio $M_N(m)/B(N,m)$ climbs
steadily toward $2N$ as $m$ increases and does not stabilize early. For $N=4$ it runs
$5.00,\,5.60,\,6.00,\,6.29,\,6.67$ at $m=7,9,11,13,17$, visibly heading toward $2N=8$; for $N=6$ it
runs $9.69,\,10.42$ at $m=11,13$, heading toward $2N=12$.
\end{remark}

\begin{example}[$N=4$, $m=13$]\label{ex:resonance-N4m13}
Every gap in Table \ref{tab:bracelet} reflects at least one pair of $D_m$-inequivalent alcove
configurations that nonetheless share the exact same value of $r_x$, which is
a coincidence an upper bound \eqref{K bound} does not predict. At $N=4$, $m=13$, exactly one such coincidence occurs,
between the dihedral orbits of $\{0,1,4,6\}$ and $\{0,1,3,9\}$, which are neither a rotation nor a reflection
of the other, and the shared value is strikingly clean, namely
$$
r_{\{0,1,4,6\}} = r_{\{0,1,3,9\}} = 13 = m.
$$

This is not an isolated numerical accident.  It reflects arithmetic structure in the prime
factorization of $r_x$ over $\mathcal O_{K^+}$, where $K^+ := \mathbb Q(\zeta_m)^+$, tied to the ramification
of $m=13$ in $ \mathbb Q(\zeta_{13})$. We do not pursue the number-theoretic study of the
factorization of $\{r_x\}$ here, including when such resonances imply exact coincidences versus merely
constraining valuations. We will leave the broader number-theoretic study of $\{r_x\}$ for a
future investigation.
\end{example}

\section{Examples}\label{sec:examples}

In every example below, the distinct values of $r_x$ were identified exactly in $\mathbb Q(\zeta_m)^+$ from $60$ significant digits numerics, after which $D_N$ was assembled from the defining product $\prod_x(w-r_x)$ in exact integer arithmetic.  The resulting
Binet formula and reduced recursion were verified against direct evaluation of \eqref{eq:defn} to at least $15$ significant digits
for several values of $n$ beyond the minimal order of the recursion equation. To save space, in many parts simply will
list the results.  For $N=2$, the results were compared with \cite{JKS24}, and for $N=3$ with \cite{JKS26}.

\begin{example}{$N=2$. }\label{subsec:N2}
Here $\mathcal A(m,2)=\{1,\dots,m-1\}$, $r_x=4\sin^2(\pi x/m)$, $c_{2,m}=2m$.

\smallskip
\emph{$m=5$:} $D_2(w)=(w^2-5w+5)^2$. The distinct roots of $D_{2}$ are $w=\tfrac{5\pm\sqrt5}2$, each of multiplicity $2$. $K_{\min}=2$.
The reduced recursion is
\[
V_n(2,5) = 10\,V_{n-1}(2,5) - 20\,V_{n-2}(2,5)
\,\,\,\,\,
\textrm{\rm for $n>2$,}
\]
with seeds $V_1(2,5)=40$, $V_2(2,5)=240$. This reproduces $V_3(2,5)=1600$, $V_4(2,5)=11200$.   Binet's formula is
$$
V_n(2,5) = 4\left[(5+\sqrt5)^n + (5-\sqrt5)^n\right]
\,\,\,\,\,
\textrm{\rm for $n\ge 1$.}
$$

\smallskip
\emph{$m=7$:} $D_2(w)=(w^3-7w^2+14w-7)^2$, a single irreducible cubic factor of degree $=\varphi(7)/2=3$ and
of multiplicity $2$. $K_{\min}=3$. The reduced recursion is
\[
V_n(2,7) = 28\,V_{n-1}(2,7) - 196\,V_{n-2}(2,7) + 392\,V_{n-3}(2,7)
\textrm{\rm for $n>3$,}
\]
with seeds $V_1(2,7)=112$, $V_2(2,7)=1568$, $V_3(2,7)=26656$. This reproduces $V_4(2,7)=482944$, $V_5(2,7)=8912512$.
Binet's formula is
\[
V_n(2,7) = 4\left(y_0^n + y_1^n + y_2^n\right)
\,\,\,\,\,
\textrm{\rm for $n\ge 1$,}
\]
where $y_0, y_1, y_2$ are the roots of $y^3 - 28y^2 + 196y - 392 = 0$.
\end{example}

\medskip
\begin{example}{$N=3$.}\label{subsec:N3}
Here $c_{3,m}=3m^2$.

\smallskip
\emph{$m=5$:} $D_3(w)=(w^2-25w+125)^3$ which has the distinct roots $w=\tfrac{25\pm5\sqrt5}2$, each of multiplicity $3$. $K_{\min}=2$. With $y=\tfrac{75}w=\tfrac{3(5\mp\sqrt5)}2$, we get that
\[
V_n(3,5) = 6\left[\Big(\tfrac{3(5+\sqrt5)}2\Big)^{\!n} + \Big(\tfrac{3(5-\sqrt5)}2\Big)^{\!n}\right]
\,\,\,\,\,
\textrm{\rm for $n\ge 1$.}
\]
Equivalently the order $2$ recursion is
$$
V_n(3,5)=15V_{n-1}(3,5)-45V_{n-2}(3,5)
\,\,\,\,\,
\textrm{\rm for $n\ge 3$,}
$$
with seeds $V_1(3,5)=90$, $V_2(3,5)=810$. This reproduces $V_3(3,5)=8100$, $V_4(3,5)=85050$.

\smallskip
\emph{$m=6$:} $D_3(w)=(w-3)^3(w-12)^6(w-27)$. $K_{\min}=3$. The reduced recursion is
\[
V_n(3,6) = 49\,V_{n-1}(3,6) - 504\,V_{n-2}(3,6) + 1296\,V_{n-3}(3,6), \qquad n>3,
\]
with seeds $V_1(3,6)=332$, $V_2(3,6)=8780$, $V_3(3,6)=288812$. This reproduces $V_4(3,6)=10156940$.
The Binet formula is
\[
V_n(3,6) = 6\cdot 36^n + 12\cdot 9^n + 2\cdot 4^n
\,\,\,\,\,
\textrm{\rm for $n\ge 1$.}
\]
\end{example}

\medskip
\begin{example}{$N=4$.}\label{subsec:N4}
Here $c_{4,m}=4m^3$.

\smallskip
\emph{$m=5$:}
When $N=4,m=5$, by Theorem~\ref{thm:resultant}, the iterated resultant has degree $m^{N-1}=5^3=125$. Corollary~\ref{cor:extract} removes $w^{125-M_4(5)\cdot3!}=w^{125-24}=w^{101}$ and takes a $3!=6$-th root of the rest, leaving a degree $4$ polynomial
\[
D_4(w) = (w-125)^4 = w^4 - 500\,w^3 + 93750\,w^2 - 7812500\,w + 244140625.
\]
Also, $M_{4}(5)=4$.  The
unreduced order $4$ recursion from Theorem~\ref{thm:recursion} we obtain the coefficients
$$
(p_0, p_1, p_2, p_3) = (1953125000,\ -23437500000,\ 93750000000,\ -125000000000)
$$
and
\[
(q_0,q_1,q_2,q_3,q_4) = (244140625,\ -3906250000,\ 23437500000,\ -62500000000,\ 62500000000).
\]
Note that these values already have  $9$ to $11$ digits. However, by writing $D_4(500\tau) = 244140625\,(4\tau-1)^4=  5^{12}(4\tau-1)^4$
makes it transparent that the $q_k$'s are the binomial coefficients of $(4\tau-1)^4$ scaled by $5^{12}$. The corresponding numerator is
\begin{align*}
P_4(\tau) &= -1953125000\,(4\tau-1)^3
\\&= 1953125000 - 23437500000\,\tau + 93750000000\,\tau^2 - 125000000000\,\tau^3.
\end{align*}
The full unreduced order 4 recursion reads
\[
244140625\,V_n - 3906250000\,V_{n-1} + 23437500000\,V_{n-2} - 62500000000\,V_{n-3} + 62500000000\,V_{n-4}=0
\]
with
$V_1(4,5) = 32$, $V_2(4,5) = 128$,  $V_3(4,5) = 512$, and $V_4(4,5) = 2048$.  The normalized recursion formula becomes
\[
V_n(4,5) - 16\,V_{n-1}(4,5) + 96\,V_{n-2}(4,5) - 256\,V_{n-3}(4,5) + 256\,V_{n-4}(4,5) = 0
\,\,\,\,\,
\textrm{\rm for $n >4$.}
\]

However, because $D_4$ has a single root of multiplicity 4, the factor $(4\tau-1)^3$ cancels between $P_4$ and $Q_4$, and the generating function
collapses immediately to
\[
\sum_{n\ge0} V_n(4,5)\,\tau^n = \frac{8}{1-4\tau},
\]
thus recovering $V_n(4,5) = 8\cdot 4^n$.  This example is the most stark illustration of Theorem 4.7 subsuming Theorem 4.1 when $K_{\min}=1$.

\smallskip
\text{\it Remark.}
The case $N=4$ and $m=5$ has level $k=1$, since $k = m-N$.  In the intriguing article \cite{Za96}, Zagier pointed out
that direct computations which seek trigonometric simplification of $V_{n}(4,m)$ becomes impractical.  Indeed,
any reader would appreciate the characteristic candor and wit from \cite{Za96} upon reading the following.

\smallskip
\begin{quote}
\itshape ``The method of calculation also works for the Verlinde formula of higher rank $n$ \dots and expresses the Verlinde number in
this case as the coefficient of $\prod(\sin x_i)^{2g-2}$ in a certain trigonometric function of $n(n-1)/2$ variables $x_i$. However,
already for $n=4$ the form obtained for this trigonometric function (of $6$ variables) was as a sum of about $80$ terms, and the
algebra of putting these terms over a common denominator and simplifying defeated both me and the computer.''
\end{quote}

\smallskip
The variable $n$ in \cite{Za96} refers to our $N$, which means there are exactly $6=\binom42$ variables, one per positive root,
exactly as in \S\ref{sec:vandermonde}.
Continuing in the same elementary spirit, the resultant construction of this paper offers one way to work around this particular computational
hurdle.  Indeed, rather than working with all $\binom N2$ pairwise trigonometric differences at once, the resultant considers
the $N-1$ underlying points
$u_1,\dots,u_{N-1}$ directly \eqref{eq:alcove}, from which every pairwise difference is recovered via the Vandermonde determinant
$\Delta$ of Lemma~\ref{lem:sign},  and then eliminates those $N-1$ variables
algebraically via iterated resultants involving $Q(u)=u^m-1$, in place of direct trigonometric simplification.

\smallskip
\textbf{$m=6$:}  Here $K_{\min}(4,6)=3>1$, so this example is the first in this paper where the Binet formula and reduced recursion have real,
multi-term structure rather than a simple geometric sequence which occurs when $K_{\min}=1$.

By Theorem~\ref{thm:resultant}, the resultant has degree $m^{N-1}=6^3=216$. Corollary~\ref{cor:extract}
removes $w^{216-M_4(6)\cdot3!}=w^{216-60}=w^{156}$ and takes a $6$-th root of the rest, leaving the degree-$10$ polynomial
\[
D_4(w) = (w-36)^4(w-108)^4(w-144)^2,
\]
which in expanded form is
\begin{align*}
D_4(w) &= w^{10} - 864\,w^9 + 326592\,w^8 - 70917120\,w^7 \\
&\quad + 9765287424\,w^6 - 888127193088\,w^5 + 53871009251328\,w^4 \\
&\quad - 2147805009543168\,w^3 + 53852167023427584\,w^2 \\
&\quad - 767793272413224960\,w + 4738381338321616896.
\end{align*}
The largest coefficient in the expansion of $D_{4}(w)$ has $19$ digits.
Theorem~\ref{thm:recursion}, applied with no further reduction, gives the unreduced, order $M_4(6)=10$ recursion
$$
\sum_{k=0}^{10}q_kV_{n-k}(4,6)=0
\,\,\,\,\,
\textrm{\rm for $n > 10$}
$$
whose coefficients run from $19$ digits ($q_0=4738381338321616896$) up to $30$ digits ($q_{10}=-q_9=231812806445087701493115518976$).

However, since $D_4(w)$ has only $K_{\min}(4,6)=3$ distinct roots, namely $36,108,144$, of multiplicity $4,4,2$ respectively, we get the
$y$-values from Theorem \ref{thm:binet} to be
$y=c_{4,6}/a=864/a$ equal to $24,8,6$.  As a result, Theorem~\ref{thm:binet} collapses the unreduced order $10$ recursion above to the order $3$ recursion
\[
V_n(4,6) = 38\,V_{n-1}(4,6) - 384\,V_{n-2}(4,6) + 1152\,V_{n-3}(4,6),
\,\,\,\,\,
\textrm{\rm for $n>3$}
\]
with seeds $V_1(4,6)=280$, $V_2(4,6)=5264$, $V_3(4,6)=115552$. Equivalently the closed Binet form
\[
V_n(4,6) = 2\big(4\cdot24^n + 4\cdot8^n + 2\cdot6^n\big)
\,\,\,\,\,
\textrm{\rm for $n\ge 1$. }
\]

\smallskip
\textbf{$m=7$:} $D_4(w)=(w-49)^8(w^3-245w^2+14406w-343^2)^4$.  $K_{\min}=4$. The reduced characteristic polynomial in $y=1372/w$ is
\[
y^4 - 196y^3 + 8624y^2 - 131712y + 614656,
\]
giving the order $4$ recursion
\[
V_n(4,7) = 196\,V_{n-1}(4,7) - 8624\,V_{n-2}(4,7) + 131712\,V_{n-3}(4,7) - 614656\,V_{n-4}(4,7)
\,\,\,\,\,
\textrm{\rm for $n> 4$}
\]
with seeds
\[
V_1(4,7)=1792,\quad V_2(4,7)=175616,\quad V_3(4,7)=23005696,\quad V_4(4,7)=3206045696.
\]
This reproduces $V_5(4,7)=452013105152$ and $V_6(4,7)=63867813330944$. Equivalently, the Binet formula reads
\[
V_n(4,7) = 16\cdot28^n + 8\sum_{k=0}^2\Big(\tfrac{1372}{x_k}\Big)^{\!n},
\]
where $x_0,x_1,x_2$ are the three real roots of $x^3-245x^2+14406x-117649=0$.
\end{example}

\medskip
\begin{example}{$N=5$.}\label{subsec:N5}
Here $c_{5,m}=5m^4$.

\smallskip
\textbf{$m=6$:} This case is small enough to write out every intermediate object in full, and the contrast between unreduced and
reduced recursion formula is, in our opinion, worth seeing explicitly.

By Theorem~\ref{thm:resultant}, the iterated resultant $\widetilde D_5(w)$ has degree $m^{N-1}=6^4=1296$ in $w$. Corollary~\ref{cor:extract}
extracts $D_5(w)$ by removing the factor $w^{1296-M_5(6)\cdot4!}=w^{1296-120}=w^{1176}$ and taking a $4!=24$-th root of what remains of the
original degree $1296$ polynomial. We get
\begin{align*}
D_5(w) &= (w-1296)^5 \\&= w^5 - 6480\,w^4 + 16796160\,w^3 - 21767823360\,w^2 + 14105549537280\,w - 3656158440062976,
\end{align*}
a degree-$5$ polynomial whose coefficients already run to $16$ digits. Theorem~\ref{thm:recursion}, applied directly to this $D_5(w)$ with no further reduction, gives the unreduced, order $M_5(6)=5$ recursion $\sum_{k=0}^5 q_kV_{n-k}(5,6)=0$ for $n>5$, with
\begin{align*}
(q_0,q_1,q_2,q_3,q_4,q_5) &= (-3656158440062976,\ 91403961001574400,\
\\& -914039610015744000,\ 4570198050078720000,\
\\&-11425495125196800000,\ 11425495125196800000),
\end{align*}
each entry already being $16$ to $20$ digits long . These coefficients are exactly $1296^5$ times the coefficients of $(5\tau-1)^5=3125\tau^5-3125\tau^4+1250\tau^3-250\tau^2+25\tau-1$, so their size is partly the overall scale $c_{5,6}^{\,5}=6480^5$
built into the normalization of Theorem~\ref{thm:recursion}.

None of this is necessary because $D_5(w)=(w-1296)^5$ has a single root, so $K_{\min}(5,6)=1$.
Indeed, the bound of Proposition~\ref{prop:bracelet} gives $B(5,6)=1$.  The Binet formula (Theorem~\ref{thm:binet})
 collapses the entire order $5$ recursion above to the order $1$ recursion
\[
V_n(5,6) = 5\,V_{n-1}(5,6)
\,\,\,\,\,
\textrm{\rm for $n > 1$ with}
\,\,\,\,\,
V_1(5,6)=50
\]
which simply becomes the formula that
\[
V_n(5,6)=2\cdot5^{\,n+1}
\,\,\,\,\,
\textrm{for all $n \ge 1$.}
\]

\smallskip
\textbf{$m=7$:} The previous two extended examples reduced to rational roots.  Here $D_5(w)$ has a single irreducible cubic factor, namely
\[
D_5(w) = \big(w^3-2401w^2+1647086w-282475249\big)^5.
\]
The value $K_{\min}(5,7)=3$ arises purely from the Galois-orbit structure of Lemma~\ref{lem:galois}.
By Theorem~\ref{thm:resultant}, the iterated resultant has degree $m^{N-1}=7^4=2401$.  Corollary~\ref{cor:extract} removes $w^{2401-M_5(7)\cdot4!}=w^{2401-360}=w^{2041}$ and takes a $24$-th root of the rest, leaving the degree-$15$ polynomial $D_5(w)$ displayed above, whose full expansion, which is needed for the unreduced recursion, has a constant term of $43$ digits and a leading coefficient, after the substitution $w=c_{5,7}\tau$ of Theorem~\ref{thm:recursion}, of $62$ digits.  Fortunately, none of this is necessary upon obtaining the reduced recursive formula.

With the three roots of the cubic sharing multiplicity $5$ and $y$-value $y=c_{5,7}/w$, the Binet formula collapses the
order $15$ unreduced recursion to the order $3$ recursion, namely
\[
V_n(5,7) = 70\,V_{n-1}(5,7) - 1225\,V_{n-2}(5,7) + 6125\,V_{n-3}(5,7)
\,\,\,\,\,
\textrm{\rm for $n > 3$, }
\]
with seeds $V_1(5,7)=700$, $V_2(5,7)=24500$, $V_3(5,7)=1041250$.
The Binet formula is
\[
V_n(5,7) = 10\left(y_1^n + y_2^n + y_3^n\right)
\,\,\,\,\,
\textrm{\rm for $n \ge 1$, }
\]
where $y_1, y_2, y_3$ are the roots of $y^3 - 70y^2 + 1225y - 6125 = 0$.

\smallskip
\textbf{$m=8$:} Here
\[
D_5(w) = (w-1024)^5(w-512)^{10}(w-256)^{10}\big(w^2-1536w+65536\big)^5,
\]
of degree $5+10+10+2\cdot5=35=M_5(8)$, so $K_{\min}(5,8)=5$. There are three rational roots, together with an irreducible quadratic, whose
degree matches the bound $\varphi(8)/2=2$ of Lemma~\ref{lem:galois} exactly. The roots of the quadratic  are $768\pm512\sqrt2$.

By Theorem~\ref{thm:resultant}, the resultant has degree $m^{N-1}=8^4=4096$. Corollary~\ref{cor:extract} removes
$w^{4096-M_5(8)\cdot4!}=w^{4096-840}=w^{3256}$ and takes a $24$-th root of the rest,  leaving the degree-$35$ polynomial
$D_5(w)$ above.  The constant coefficient of $D_{5}(w)$  has $91$ digits, and is itself the ``small'' output of a construction
whose leading coefficient, $c_{5,8}^{\,35}=20480^{35}$, has $151$ digits.

With $c_{5,8}=5\cdot8^4=20480$, the $y$-values $y_a=c_{5,8}/a$ are $y_{1024}=20$, $y_{512}=40$, $y_{256}=80$,
and $y_{768\mp512\sqrt2}=240\pm160\sqrt2$, which in total are the roots of
\[
y^5 - 620y^4 + 79200y^3 - 3648000y^2 + 66560000y - 409600000 = (y-20)(y-40)(y-80)\big(y^2-480y+6400\big).
\]
By Theorem~\ref{thm:binet}, the Binet formula reads
\[
V_n(5,8) = 2\Big[5\cdot20^n + 10\cdot40^n + 10\cdot80^n + 5\big(240-160\sqrt2\big)^{\!n} + 5\big(240+160\sqrt2\big)^{\!n}\Big]
\,\,\,\,\,
\textrm{\rm for $n \ge 1$.}
\]
By Theorem~\ref{thm:minrec}, the order $5$ recursion
\begin{align*}
V_n(5,8) &= 620\,V_{n-1}(5,8) - 79200\,V_{n-2}(5,8) + 3648000\,V_{n-3}(5,8)
\\&- 66560000\,V_{n-4}(5,8) + 409600000\,V_{n-5}(5,8)
\,\,\,\,\,
\textrm{\rm for $n > 5$,}
\end{align*}
with seeds
\begin{align*}
V_1(5,8)&=7400,\quad V_2(5,8)=2340000,\quad V_3(5,8)=1025360000,\quad \\ V_4(5,8)&=473550400000,\quad V_5(5,8)=220465184000000.
\end{align*}

\smallskip
\textbf{$m=9$:} Here
\begin{align*}
D_5(w) &= (w-729)^{10}\big(w^3-2187w^2+649539w-4782969\big)^5\big(w^3-1458w^2+531441w-43046721\big)^5
\\&\cdot \big(w^3-729w^2+118098w-4782969\big)^{10},
\end{align*}
of degree $10+3\cdot5+3\cdot5+3\cdot10=70=M_5(9)$.  There is one rational root together with three distinct irreducible cubics,
each of degree matching the bound $\varphi(9)/2=3$ of Lemma~\ref{lem:galois}. $K_{\min}(5,9)=1+3+3+3=10$.

By Theorem~\ref{thm:resultant}, the resultant has degree $m^{N-1}=9^4=6561$. Corollary~\ref{cor:extract} removes
$w^{6561-M_5(9)\cdot4!}=w^{6561-1680}=w^{4881}$ and takes a $24$-th root of the rest. Even after this extraction, $D_5(w)$'s own constant term runs to $167$ digits, and the leading coefficient $c_{5,9}^{\,70}=32805^{70}$ has $317$ digits.

With $c_{5,9}=5\cdot9^4=32805$, the ten $y$-values are $y_{729}=45$ together with the roots of
\[
y^3-4455y^2+492075y-7381125=0, \quad y^3-405y^2+36450y-820125=0, \quad y^3-810y^2+164025y-7381125=0,
\]
call them $\{\alpha_1,\alpha_2,\alpha_3\}$, $\{\beta_1,\beta_2,\beta_3\}$, $\{\gamma_1,\gamma_2,\gamma_3\}$ respectively. By Theorem~\ref{thm:binet}, the Binet formula reads
\[
V_n(5,9) = 2\Big[10\cdot45^n + 5\sum_{i=1}^3\alpha_i^{\,n} + 5\sum_{i=1}^3\beta_i^{\,n} + 10\sum_{i=1}^3\gamma_i^{\,n}\Big], \qquad n\ge1,
\]
and by Theorem~\ref{thm:minrec} the same data gives the order $10$ recursion with characteristic polynomial
\begin{align*}
y^{10} &- 5715y^9 + 6688575y^8 - 3353491125y^7 + 880568212500y^6 - 131890370175000y^5
\\&+ 11743683572812500y^4 - 625714356960703125y^3 + 19306706595380859375y^2
\\&- 312768646845169921875y + 2010655586861806640625.
\end{align*}
This yields the recursion formula
\begin{align*}
V_n(5,9) &= 5715\,V_{n-1}(5,9) - 6688575\,V_{n-2}(5,9) + 3353491125\,V_{n-3}(5,9) - 880568212500\,V_{n-4}(5,9)
\\&+ 131890370175000\,V_{n-5}(5,9) - 11743683572812500\,V_{n-6}(5,9) + 625714356960703125\,V_{n-7}(5,9)
\\&- 19306706595380859375\,V_{n-8}(5,9) + 312768646845169921875\,V_{n-9}(5,9)
\\&- 2010655586861806640625\,V_{n-10}(5,9)
\end{align*}
for $n > 10$  with seeds
\begin{align*}
&V_1(5,9)=65700,\ V_2(5,9)=196141500,\ V_3(5,9)=821988506250,\ V_4(5,9)=3556177391587500,
\\&
V_5(5,9)=15434912848584656250,\ V_6(5,9)=67016244398814991406250
\\& V_7(5,9)=290987216868070298404687500,
V_8(5,9)=1263484301865900564451992187500,
\\& V_9(5,9)=5486129342184888764351570039062500,
\\&
V_{10}(5,9)=23821124815261353542519236156347656250.
\end{align*}
This reproduces $V_{11}(5,9)=103432849797672199075728589595006835937500$ exactly, verifying the order $10$ recursion
against an eleventh, independently computed term.

\smallskip
\textbf{$m=10$:} Here $M_5(10) = \binom{9}{4} = 126$, and $c_{5,10} = 5\cdot 10^4 = 50000$. Since $m=10$,
every root of $D_5(w)$ lies in $\mathbb{Q}(\zeta_{10})^+ = \mathbb{Q}(\sqrt5)$.
By Lemma~\ref{lem:galois}, noting that $\varphi(10)/2 = 2$ we have that every irreducible factor over $\mathbb Q$ is linear or quadratic, and every root can be written in $\mathbb Q(\sqrt5)$. Direct computation (verified against 60-digit numerical
evaluation of \eqref{eq:defn}) gives that
\begin{align*}
D_5(w) = \;& (w-125)^5 (w-400)^{10} (w-3125) \\
&\cdot \bigl(w^2 - 450w + 625\bigr)^5 \bigl(w^2-300w+10000\bigr)^{20} \bigl(w^2-700w+10000\bigr)^{10} \\
&\cdot \bigl(w^2-1200w+160000\bigr)^{10} \bigl(w^2-1500w+250000\bigr)^{10},
\end{align*}
a monic polynomial of degree $5+10+1+2(5+20+10+10+10) = 126 = M_5(10)$. So
$K_{\min}(5,10) = 13$, and there are three rational roots ($125,\,400,\,3125$) together with five distinct
irreducible quadratics, each attaining the bound $\varphi(10)/2=2$ of
Lemma~\ref{lem:galois}.

With $y_a := c_{5,10}/a = 50000/a$, the thirteen roots and their $y$-values are
\[
\begin{array}{c|c|c}
a & \mu(a) & y_a \\\hline
125 & 5 & 400 \\
400 & 10 & 125 \\
3125 & 1 & 16 \\
225 \mp 100\sqrt5 & 5 & 18000 \pm 8000\sqrt5 \\
150 \mp 50\sqrt5 & 20 & 750 \pm 250\sqrt5 \\
350 \mp 150\sqrt5 & 10 & 1750 \pm 750\sqrt5 \\
600 \mp 200\sqrt5 & 10 & \tfrac{375}{2} \pm \tfrac{125}{2}\sqrt5 \\
750 \mp 250\sqrt5 & 10 & 150 \pm 50\sqrt5
\end{array}
\]
By Theorem~\ref{thm:binet},
\begin{align}
V_n(5,10) = \;& 2\Bigl[5\cdot 400^n + 10\cdot 125^n + 16^n +5\bigl((18000+8000\sqrt5)^n+(18000-8000\sqrt5)^n\bigr) \notag \\
&+20\bigl((750+250\sqrt5)^n+(750-250\sqrt5)^n\bigr) +10\bigl((1750+750\sqrt5)^n+(1750-750\sqrt5)^n\bigr) \notag \\
&+10\Bigl(\bigl(\tfrac{375+125\sqrt5}{2}\bigr)^n+\bigl(\tfrac{375-125\sqrt5}{2}\bigr)^n\Bigr) +10\bigl((150+50\sqrt5)^n+(150-50\sqrt5)^n\bigr)\Bigr], \qquad n \ge 1. \label{eq:binet-5-10}
\end{align}

Using the Binet formula expression for the $n$th Lucas number, namely that
$$
L_n=\left(\frac{1+\sqrt{5}}{2}\right)^n + \left(\frac{1-\sqrt{5}}{2}\right)^n, \quad n\ge 0,
$$
one easily confirms that \eqref{eq:binet-5-10} equals \eqref{Vn5,10}.

The reduced recursion formula has $13$ terms, so we need
$V_{n}(5,10)$ for $n \leq 13$.  As it turns out, $V_{13}(5,10)$ has more than $60$ digits, and its
coefficient in the reduced recursion formula has more than $30$ digits.  As such, we will not
present the formula in this space.
\end{example}

\medskip
\begin{example}[$N=6$, $m=8$]\label{ex:N6m8-full}
Here $\binom{6}{2}=15$, so $(-1)^{\binom{6}{2}}=-1$ and
$$
W_6(u_1,\dots,u_5;w) = w\prod_{a=1}^5 u_a^5 + \Delta(1,u_1,\dots,u_5)^2.
$$
The alcove $A(8,6)$ has $M_6(8)=\binom{7}{5}=21$ points. At $x=(1,2,3,4,6)$, with $\zeta:=e^{2\pi i/8}$ and $u_a=\zeta^{x_a}$, one computes
$$
\Delta(1,u_1,\dots,u_5)^2 = -8192,
$$
so $r_x=|\Delta|^2=8192$. Indeed
$$
W_6(u_1,\dots,u_5;8192)=8192\cdot\zeta^{5(1+2+3+4+6)}+(-8192)=8192\cdot\zeta^{80}-8192=0.
$$
Since $80\equiv0\pmod8$, this confirms that $w=8192$ is the root predicted by Lemma~\ref{lem:sign}. The $21$ alcove points in this example do not all coincide.  By direct evaluation of $\{r_x\}_{x\in A(8,6)}$ we get that
$$
D_6(w) = (w-8192)^6(w-16384)^3\bigl(w^2-16384w+33554432\bigr)^6,
$$
which is a monic degree-$21$ polynomial, with $6+3+2\cdot 6=21=M_6(8)$.  The irreducible quadratic factor of $D_{6}$
has roots $8192\mp 4096\sqrt2$,  matching the Galois bound $\varphi(8)/2=2$ of Lemma \ref{lem:galois} because
$\mathbb Q(\zeta_8)^+=\mathbb Q(\sqrt2)$.

Note that $(N-1)!=120$, $m^{N-1}=8^5=32768$, so the predicted power of $w$ is $32768-21\cdot 120=30248$.
Direct symbolic computation of the iterated resultant gives
$$
\widetilde D_6(w) = w^{30248}(w-8192)^{720}(w-16384)^{360}\bigl(w^2-16384w+33554432\bigr)^{720},
$$
confirming that  $\widetilde{D}_6(w)=\varepsilon_{6,8}\,w^{30248}D_6(w)^{120}$ with $\varepsilon_{6,8}=1$.
With $c_{6,8}=6\cdot 8^5=196608$, we have $K_{\min}(6,8)=4$ with $y$-values $24,12,48\pm 24\sqrt2$ of
multiplicities $6,3,6,6$ respectively, thus giving the Binet formula
$$
V_n(6,8) = 2\Bigl[\,6\cdot 24^n + 3\cdot 12^n + 6(48+24\sqrt2)^n + 6(48-24\sqrt2)^n\,\Bigr]
\,\,\,\,\,
\textrm{\rm $n\ge 1$.}
$$
Equivalently the reduced order $4$ recursion
$$
V_n(6,8) = 132\,V_{n-1}(6,8) - 4896\,V_{n-2}(6,8) + 69120\,V_{n-3}(6,8) - 331776\,V_{n-4}(6,8)
\,\,\,\,\,
\textrm{\rm for $n > 4$,}
$$
with seeds $V_1(6,8)=1512$, $V_2(6,8)=90720$, $V_3(6,8)=6811776$, $V_4(6,8)=545564160$.  These formulas confirm that $V_5(6,8)=44432934912$.
\end{example}

\medskip
\begin{example}[$N=8$, $m=10$]\label{ex:N8m10}
Here $M_8(10)=\binom{9}{7}=36$, and $c_{8,10}=8\cdot10^7=80000000$. Since $m=10$, every root of $D_8(w)$ again lies in $\mathbb Q(\zeta_{10})^+=\mathbb Q(\sqrt5)$.  By Lemma~\ref{lem:galois}, the factors of $D_{8}$ have degree at most $\varphi(10)/2=2$ over $\mathbb Q$, which
is the same field as the $N=5,m=10$ example
of \S\ref{subsec:N5}.  This is expected since the field depends only on $m$. Despite $M_8(10)=36$ alcove points, direct $80$-digit numerical
evaluation of $\{r_x\}_{x\in A(10,8)}$ collapses to just five distinct values, namely
$$
D_8(w) = \bigl(w^2-3000000w+10^{12}\bigr)^8\bigl(w^2-5000000w+5\cdot10^{12}\bigr)^8(w-4000000)^4,
$$
which is a monic degree-$36$ polynomial, so $K_{\min}(8,10)=5$.  The two irreducible quadratics, with roots $1500000\mp500000\sqrt5$
and $2500000\mp500000\sqrt5$ respectively, both attain the Galois bound $\varphi(10)/2=2$ of Lemma~\ref{lem:galois}.  There is also
the sole rational root, namely $4000000$.

With $y_a:=c_{8,10}/a$, the five $y$-values are $120\pm40\sqrt5$, $40\pm8\sqrt5$, and $20$, of multiplicities $8,8,8,8,4$ respectively.
With this, the Binet formula reads as
\begin{equation}\label{binet Vn8,10}
V_n(8,10) = 2\Bigl[4\cdot20^n + 8(120+40\sqrt5)^n+8(120-40\sqrt5)^n + 8(40+8\sqrt5)^n+8(40-8\sqrt5)^n\Bigr]
\,\,\,\,\,
\textrm{\rm $n \ge 1$.}
\end{equation}

Using the Binet formula expression for the $n$th Lucas number, combined with the Binet formula expression \eqref{eq:Classical_Binet} for the $n$th Fibonacci number,
one easily confirms that \eqref{binet Vn8,10} equals \eqref{Vn8,10}.

Equivalently the reduced order $5$ recursion is
\begin{align*}
V_n(8,10) &= 340\,V_{n-1}(8,10) - 33280\,V_{n-2}(8,10) + 1356800\,V_{n-3}(8,10) \\&- 24576000\,V_{n-4}(8,10) + 163840000\,V_{n-5}(8,10)
\end{align*}
for $n > 5$
with seeds $V_1(8,10)=5280$, $V_2(8,10)=781440$, $V_3(8,10)=150796800$, $V_4(8,10)=30986700800$, $V_5(8,10)=6459253760000$. This reproduces $V_6(8,10)=1351170379776000$, $V_7(8,10)=282898768609280000$, and $V_8(8,10)=59245723978629120000$ exactly.
\end{example}

\section*{Appendix: Proof of Proposition \ref{prop:bracelet}}

\begin{proof}
We compute $B(N,m)$ by applying Burnside's lemma to $X$, the set of $N$-element subsets of
$\mathbb Z/m\mathbb Z$, and $G=D_m$, the dihedral group of order $2m$, splitting $|D_m|=2m$ into the
$m$ rotations and $m$ reflections and counting fixed subsets of each.

Fix $k\in\{0,1,\dots,m-1\}$ and let $\rho_k\colon i\mapsto i+k\pmod m$ be the
corresponding rotation of the $m$ positions $\mathbb Z/m\mathbb Z$. The cyclic group
$\langle\rho_k\rangle$ generated by $\rho_k$ has order equal to the additive order of $k$ in
$\mathbb Z/m\mathbb Z$, namely $m/\gcd(m,k)=m/d_k$.  This is the order of the rotation $\rho_k$.
The orbits of $\langle\rho_k\rangle$ acting on $\mathbb Z/m\mathbb Z$ are the cosets of the
subgroup $\langle k\rangle=\langle d_k\rangle\le\mathbb Z/m\mathbb Z$ generated by $d_k=\gcd(m,k)$. Since
$\langle d_k\rangle$ has order $m/d_k$, each orbit has size $m/d_k$, and since the orbits partition
the $m$ positions, there are exactly $d_k$ of them. Concretely, the orbit of a position $i$ is
$\{i, i+d_k, i+2d_k,\dots\}\pmod m$, so the $d_k$ orbits are simply the residue classes of
$0,1,\dots,d_k-1$ modulo $d_k$.  We refer to these as the cycles of $\rho_k$.

A subset $T\subset\mathbb Z/m\mathbb Z$ is fixed by $\rho_k$  if and only if
$T$ is a union of complete cycles of $\rho_k$.  If $T$ contains one element of a cycle then $T$ must, by
repeated application of $\rho_k$, contain the entire cycle, and conversely any union of cycles is
visibly $\rho_k$-invariant. Thus a $\rho_k$-fixed subset $T$ with $|T|=N$ exists only when $N$ is an
integer multiple of the common cycle length $m/d_k$ (equivalently, only when $m/d_k$ divides
$N$) in which case $T$ consists of exactly $j:=Nd_k/m$ of the $d_k$ available cycles.  There
are $\binom{d_k}{Nd_k/m}$ ways to choose which $j$ cycles to include. At $k=0$ this recovers the
identity rotation.  In this case, $d_0=\gcd(m,0)=m$, so there are $m$ cycles of length $1$, the condition
$m/d_0=1\mid N$ holds automatically, and the count $\binom{m}{N}$ correctly reproduces every
$N$-subset, as it must since the identity fixes all of them.  Continuing in general, by summing
$\binom{d_k}{Nd_k/m}\mathbf 1_{m/d_k\mid N}$ over the $m$ rotations $k=0,1,\dots,m-1$ gives the total
number of rotation-fixed $N$-subsets counted in Burnside's lemma.  This is the first term
on the right-hand-side of \eqref{eq:Burnside_count}.  Note that the factor $1/2m$ is present because
$|D_m|=2m$.  It remains to compute the reflection contribution $R(N,m)$.

Every reflection in $D_m$ has the form $\sigma_c\colon i\mapsto c-i\pmod m$ for
some $c\in\mathbb Z/m\mathbb Z$, and every choice of $c$ gives a reflection.  So when $c$ ranges over
$\mathbb Z/m\mathbb Z$ we obtain each of the $m$ reflections exactly once. Each $\sigma_c$ is an
involution, since $\sigma_c(\sigma_c(i))=c-(c-i)=i$, and its fixed points are the solutions of
$2i\equiv c\pmod m$.  Let us consider the cases when $m$ is odd and when $m$ is even separately.

Assume $m$ is odd.   Since $\gcd(2,m)=1$, the congruence $2i\equiv c\pmod m$ has the unique
solution $i\equiv c\cdot 2^{-1}\pmod m$ for every $c$.  Each of the $m$ reflections fixes precisely one
position and pairs the remaining $m-1$ positions into $2$-element orbits
$\{i,c-i\}$ with $i\ne c-i$, of which there are $(m-1)/2$. A subset $T$ of size $N$ is fixed by
$\sigma_c$ exactly when $T$ is a union of complete orbits.  The fixed point of the reflection may or may not belong to
$T$, and each swapped pair contributes either both its elements or neither. If the fixed point is
excluded, $T$ is a union of $N/2$ pairs, so then $N$ even; if the fixed point is included, $T$ is the
axis point together with $(N-1)/2$ pairs, so then $N$ odd. Exactly one of these two cases is
arithmetically possible for a given $N$, and in either case the number of pairs required is
$\lfloor N/2\rfloor$, hence the number of $\sigma_c$-fixed $N$-subsets is $\binom{(m-1)/2}{\lfloor
N/2\rfloor}$ regardless of the parity of $N$. Summing over the $m$ reflections gives the term
$m\binom{(m-1)/2}{\lfloor N/2\rfloor}$.

Assume $m$ is even.   Now $\gcd(2,m)=2$, so $2i\equiv c\pmod m$ is solvable only when $c$ is even, in
which case it has exactly $2$ solutions modulo $m$, namely $i$ and $i+m/2$. Thus the $m$ reflections
split into two types according to the parity of $c$, with $m/2$ reflections of each type.

If $c$ is even, then $\sigma_c$ fixes the two positions $i,i+m/2$
and pairs the remaining $m-2$ positions into $(m-2)/2$ orbits of size $2$. A fixed $N$-subset
chooses some $f\in\{0,1,2\}$ of the two fixed points together with $(N-f)/2$ complete pairs, which
requires $N\equiv f\pmod 2$.  By summing the valid choices of $f$ gives $\sum_{f=0}^{2}\binom{2}{f}
\binom{(m-2)/2}{(N-f)/2}$ fixed $N$-subsets for each such reflection, where a term is omitted
whenever $(N-f)/2$ fails to be a nonnegative integer.

If $c$ is odd, then  $\sigma_c$ has no fixed points, thus pairing all $m$ positions into $m/2$
orbits of size $2$. A fixed $N$-subset is then a union of exactly $N/2$ complete pairs, which is only
possible when $N$ is even, giving $\binom{m/2}{N/2}\mathbf 1_{2\mid N}$ fixed $N$-subsets for each
such reflection.

By multiplying each count by the $m/2$ reflections of the corresponding type and adding the two
contributions gives
$$
R(N,m) = \frac{m}{2}\sum_{f=0}^{2}\binom{2}{f}\binom{(m-2)/2}{(N-f)/2} + \frac{m}{2}\binom{m/2}{N/2}\mathbf 1_{2\mid N},
$$
which is why $R(N,m)$ splits into two separate pieces when $m$ is even, unlike the single unified
term available when $m$ is odd.

Adding the rotation and reflection contributions and dividing by $2m$ gives $B(N,m)$, by Burnside's
lemma.
\end{proof}

\vspace{4mm}
\noindent
Jay Jorgenson \\
 Department of Mathematics \\
 The City College of New York \\
 Convent Avenue at 138th Street \\
 New York, NY 10031 U.S.A. \\
 e-mail: jjorgenson@mindspring.com

\vspace{4mm}
\noindent
Anders Karlsson \\
 Section de mathématiques\\
 Université de Genève\\
 2-4 Rue du Liévre\\
 Case Postale 64, 1211\\
 Genève 4, Suisse\\
 e-mail: anders.karlsson@unige.ch \\
 and \\
 Matematiska institutionen \\
 Uppsala universitet \\
Box 256, 751 05 \\
 Uppsala, Sweden \\
 e-mail: anders.karlsson@math.uu.se

\vspace{4mm}
\noindent
Lejla Smajlovi\'{c} \\
 Department of Mathematics and Computer Science\\
 University of Sarajevo\\
 Zmaja od Bosne 35, 71 000 Sarajevo\\
 Bosnia and Herzegovina\\
 e-mail: lejlas@pmf.unsa.ba

\end{document}